\theoremstyle{definition}
\newtheorem{definition}{Definition}[section]
\newtheorem{remark}[definition]{Remark}
\newtheorem{example}[definition]{Example}
\newtheorem{examples}[definition]{Examples}
\theoremstyle{plain}
\newtheorem{theorem}[definition]{Theorem}
\newtheorem{lemma}[definition]{Lemma}
\newtheorem{corollary}[definition]{Corollary}
\newcommand{\bra}{\ensuremath{\big\langle}}
\newcommand{\ket}{\ensuremath{\big\rangle}}
\newcommand{\card}{\ensuremath{{\rm{card}}}}
\newlist{thm-enum}{enumerate}{2}
\newlist{pf-enum}{enumerate}{2}
\newlist{def-enum}{enumerate}{2}
\newlist{ex-enum}{enumerate}{2}
\newlist{rmk-enum}{enumerate}{2}
\setlist[thm-enum,1]{label=\textbf{\em (\roman*)},leftmargin=*,labelindent=.1\parindent}
\setlist[thm-enum,2]{label=\textbf{\em (\alph*)},leftmargin=*,labelindent=.1\parindent}
\setlist[pf-enum,1]{label=(\roman*),leftmargin=*,labelindent=.1\parindent}
\setlist[pf-enum,2]{label=(\alph*),leftmargin=*,labelindent=.1\parindent}
\setlist[def-enum,1]{label=\textbf{(\arabic*)},leftmargin=*,labelindent=.15\parindent}
\setlist[def-enum,2]{label=\textbf{(\alph*)},leftmargin=*,labelindent=.1\parindent}
\setlist[ex-enum,1]{label=\textbf{\arabic*.},leftmargin=*,labelindent=.15\parindent}
\setlist[rmk-enum,1]{label=\textbf{\arabic*.},leftmargin=*,labelindent=.15\parindent}
\newcounter{axiom-counter}
\title{Filter Extension and Ultrafilter Characterization}
\author{Max B. Garcia}
\begin{document}

\title{Filters and Ultrafilters in Real Analysis}
\author{Max Garcia\\ 
                        Mathematics Department\\                
                        California Polytechnic State University\\
                        San Luis Obispo, California 93407, USA\\
E-mail: mgarci78@calpoly.edu}
\date{}
\maketitle
\begin{abstract} We study free filters and their maximal extensions on the set of natural numbers. We characterize the limit of a sequence of real numbers in terms of the Fr\'{e}chet filter, which involves only one quantifier as opposed to the three non-commuting quantifiers in the usual definition. We construct the field of  real non-standard numbers and study their properties. We characterize the limit of a sequence of real numbers in terms of non-standard numbers which only requires a single quantifier as well. We are trying to make the point that the involvement of filters and/or non-standard numbers leads to a reduction in the number of quantifiers and hence, simplification, compared to the more traditional $\varepsilon, \delta$-definition of limits in real analysis.

\begin{center}
\line(1,0){250}
\end{center}

\noindent \textbf{Keywords and phrases:} Limit, sequential approach to real analysis, filter, Fr\'{e}chet filter, ultrafilter, non-standard analysis, reduction of quantifiers, infinitesimals, monad, internal sets.

\noindent \textbf{AMS Subject Classification:} 03C10, 03C20, 03C50, 03H05, 12L10, 26E35, 26A03, 26A06, 30G06\noindent \textbf{Keywords and phrases:} Limit, sequential approach to real analysis, filter, Fr\'{e}chet filter, ultrafilter, non-standard analysis, reduction of quantifiers, infinitesimals, monad, internal sets.

\noindent \textbf{AMS Subject Classification:} 03C10, 03C20, 03C50, 03H05, 12L10, 26E35, 26A03, 26A06, 30G06
\end{abstract}

\tableofcontents
\setcounter{tocdepth}{3}

\newpage

\pagenumbering{arabic}


\section*{Introduction}
\addcontentsline{toc}{section}{Introduction}

 In the \emph{sequential approach to real analysis} the definition:
\begin{equation}\label{E: General Limit}
 (\forall\varepsilon\in\mathbb R_+)(\exists\delta\in\mathbb R_+)(\forall x\in X)(0 <|x-r|<\delta \Rightarrow |f(x) -L|<\varepsilon),
\end{equation}
of the limit $\lim_{x\rightarrow r}f(x) = L$ can be  reduced to the definition:
\begin{equation}\label{E: Limit}
 (\forall\varepsilon\in\mathbb R_+)(\exists\nu\in\mathbb N)(\forall n\in\mathbb N)(n\geq\nu \Rightarrow |x_n-L|<\varepsilon).
\end{equation}
of the limit $\lim_{n\to\infty} x_n=L$ of a sequence in $\mathbb R$. Here is a \emph{summary of the sequential approach}:
\begin{enumerate}

\item  A sequence $(x_n)$ in a totally ordered field $\mathbb K$ is called \emph{convergent} if there exists $L\in\mathbb K$ such that $\lim_{n\to\infty} x_n=L$ in the sense of (\ref{E: Limit}).

\item A totally ordered field $\mathbb K$ is \emph{complete} if every fundamental (Cauchy) sequence in $\mathbb K$ is convergent. (For other characterization of completeness of an ordered field in terms of sequences, we refer to  (Hall~\cite{jHall}, Theorem 3.11.) All complete totally ordered fields are order isomorphic. We denote such a field by
 $\mathbb R$.
\item A point $r\in\mathbb R$ is a \emph{cluster point} of a set $X\subseteq\mathbb R$ if and only if there exists a sequence $(x_n)$ in $X$ such that: 
\begin{enumerate}
\item $x_n\not= r$ for all $n\in\mathbb N$.
\item $\lim_{n\to\infty} x_n=r$.
\end{enumerate}
We denote by $X_r^\mathbb N$ the set of all such sequences. 
\item Let $f: X\to\mathbb R$ be a real function. Then $\lim_{x\rightarrow r}f(x) = L$ if and only if  $\lim_{n\to\infty }f(x_n) = L$ for every sequence $(x_n)$ in $X_r^\mathbb N$.
\end{enumerate}
	We should mention that the equivalency between 3 and 4 above as well as the usual $\varepsilon,\delta$-approach, both require the involvement of the axiom of choice. We should note that although the sequential approach to real analysis is hardly new (Brannan~\cite{dBrannan}, Hewitt and K. Stromberg~\cite{hewitt} and Rudin~\cite{poma}), we are unaware of a systematical exposition written on this subject.

	The purpose of this project is to simplify the definition (\ref{E: Limit}) of $\lim_{n\to\infty} x_n=L$ (and thus to simplify the general definition (\ref{E: General Limit}) of $\lim_{x\rightarrow r}f(x) = L$) by reducing the number of quantifiers in (\ref{E: Limit}) from three to one. We achieve this by offering a characterization of the $\lim_{n\to\infty} x_n=L$ in terms of the Fr\'{e}chet filter. Using several examples, we demonstrate that our characterization of limit is convenient for proving the usual theorems in real analysis. We believe that our approach is simpler and more efficient than the conventional one. 

	In the second part of the project we extend the Fr\'{e}chet filter to a maximal filter (ultrafilter) and reproduce A. Robinson's~\cite{aRob66} characterization of limit in terms of non-standard numbers (for more accessible presentations of non-standard analysis we refer to: Cavalcante~\cite{cavalcante}, Davis~\cite{davis}, Keisler~\cite{jKeislerE}-\cite{jKeislerF}, Lindstr\o m~\cite{lindstrom}, Todorov~\cite{tdTod2000a}). We should emphasize that Robinson's characterization is again in terms of a single quantifier, and thus simpler and more elegant then the conventional $\varepsilon, \delta$-definition of limit. 

	In both characterizations -  in terms of Fr\'{e}chet's filter and in terms of non-standard numbers - we are trying to argue that it is quite possible to simplify the definition in real analysis by reducing the number of quantifiers in the definition, while still preserving the efficiency of the theory.

	Here is a more detailed description of the project.

	In Chapter 1, we present the basic definitions and properties of the free filters on $\mathbb N$ and their maximal extensions, commonly known as ultrafilters. 

	In Chapter 2, we  define what it means for a filter to be  Fr\'{e}chet as well as its characterizing properties. We then show how the  Fr\'{e}chet filter can be used to characterize limits in such a way that the number of quantifiers is reduced from 3 to 1.

	In Chapter 3, we build the non-standard numbers using the ultraproduct construction and show that ${^*}\mathbb R$ is a totally ordered field. We then characterize the numbers, sets and functions in  ${^*}\mathbb R$ and conclude by reproducing A. Robinson's characterization of limits in terms of non-standard numbers.


\chapter{Filters, Free Filters and Ultrafilters}

We begin with the basic theory of filters and ultrafilters defined on the natural numbers. These objects will serve as the foundation for our work in characterizing analysis under the  Fr\'echet filter as well as our construction of the nonstandard real numbers. After introducing the definitions for filters, free filters, and ultrafilters, we shall prove the existence of free ultrafilters and conclude with a discussion on the properties of ultrafilters. These last two sections will be of critical importance for our work in chapter 3.

\section{Filters and Ultrafilters}\label{S: Filters}
We present the basic definition for a filter and an ultrafilter on the set of natural numbers $\mathbb N$. In the following, $\mathcal{P}(\mathbb N)$ denotes the power set of $\mathbb N$.  For a more detailed exposition we refer to Davis~\cite{davis}.

\begin{definition}[Filters]\label{D: Filters} Let $\mathcal{F}$ be a non-empty subset of $\mathcal{P(\mathbb N)}$. 

	\begin{enumerate}
		\item We say $\mathcal {F}$ is a \textbf{filter} on $\mathbb N$ if:
 			\begin{enumerate}
				\item $\varnothing\notin\mathcal{F}$.
				\item $\mathcal{F}$ is closed under finite intersections, i.e.
					\[
						(\forall A,B\in\mathcal{P(\mathbb N)})(A, B\in\mathcal{F} \Rightarrow A\cap B\in\mathcal{F}).
					\] 
				\item Let $A\in\mathcal{F}$ and $B\in\mathcal{P(\mathbb N)}$. Then $A\subseteq B$ implies that $B\in\mathcal{F}$.
			\end{enumerate}
		\item A filter $\mathcal{F}$ is a \textbf{free filter} if:
			\begin{enumerate}
				\item[(d)] $\bigcap_{A\in\mathcal{F}}A=\varnothing$.
			\end{enumerate}
		\item A filter $\mathcal{F}$ is an \textbf{ultrafilter} or \textbf{maximal filter} if $\mathcal{F}$ is not properly contained in any other filter on $\mathbb N$.
	\end{enumerate}
\end{definition}

\begin{examples}[Filters]\label{Ex: Filters}
\hspace{1in}
	\begin{enumerate}
		 \item The Fr\'{e}chet filter $\mathcal{F}_r$ on $\mathbb N$ consists of the co-finite sets of $\mathbb N$, i.e.
			\[
				\mathcal{F}_r=\{S\in\mathcal{P}(\mathbb N): \mathbb N\setminus S \text{ is finite}\}.
			\]
			The Fr\'{e}chet filter is an example of a free filter that is not an ultrafilter.
		\item Let $B\subseteq\mathbb N.$ Then $\mathcal{F}=\{A\in\mathcal{P}(\mathbb N): B\subseteq A\}$ is a non free filter.
		\item Let $a\in\mathbb N.$ Then $\mathcal{F}_a=\{A\in\mathcal{P}(\mathbb N):a\in A\}$ is a non free ultrafilter.\newline
	\end{enumerate}

\end{examples}

If the reader would like to construct his/her own filter, they can accomplish this through the use of a filter basis on $\mathbb N$.

\begin{definition}[Filter Basis]
 	Let $G\subseteq\mathcal{P}(\mathbb N)$. Then $G$ is a filter basis on $\mathbb N$ if:
		\begin{enumerate}
			\item $\varnothing\not\in G$.
			\item $A,B\in G$ implies $A\cap B\in G$.
			\item $G\not=\varnothing$.
		\end{enumerate}
\end{definition}

\begin{theorem}
	If $G$ is a filter basis on $\mathbb N$, then there is a filter $\mathcal{F}$ on $\mathbb N$ such that $G\subseteq\mathcal{F}$.
\end{theorem}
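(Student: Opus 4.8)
The plan is to define $\mathcal{F}$ as the upward closure of $G$ inside $\mathcal{P}(\mathbb N)$ and then verify the filter axioms directly. Concretely, I would set
\[
    \mathcal{F} = \{ A \in \mathcal{P}(\mathbb N) : B \subseteq A \text{ for some } B \in G \}.
\]
By construction every element of $G$ lies in $\mathcal{F}$ (since $B \subseteq B$), so $G \subseteq \mathcal{F}$; moreover $\mathcal{F} \neq \varnothing$ because $G \neq \varnothing$ by the third defining property of a filter basis. It then remains to check that $\mathcal{F}$ satisfies conditions (a)--(c) of Definition~\ref{D: Filters}.

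For (a), I would argue by contradiction: if $\varnothing \in \mathcal{F}$, then some $B \in G$ satisfies $B \subseteq \varnothing$, forcing $B = \varnothing$ and contradicting the first defining property of a filter basis. Condition (c) (upward closure) is immediate from the definition, since if $A \in \mathcal{F}$ and $A \subseteq C$, any witness $B \in G$ with $B \subseteq A$ also satisfies $B \subseteq C$, whence $C \in \mathcal{F}$.

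The one step that genuinely uses the structure of a filter basis---and hence the only place that is not purely formal---is condition (b), closure under finite intersections. Given $A_1, A_2 \in \mathcal{F}$, I would choose witnesses $B_1, B_2 \in G$ with $B_1 \subseteq A_1$ and $B_2 \subseteq A_2$. The second defining property of a filter basis yields $B_1 \cap B_2 \in G$, and since $B_1 \cap B_2 \subseteq A_1 \cap A_2$, this exhibits a $G$-witness for $A_1 \cap A_2$, so $A_1 \cap A_2 \in \mathcal{F}$. I expect no real obstacle here; the main point to be careful about is that filter-basis axiom (2) is exactly what guarantees the two witnesses can be intersected \emph{inside} $G$, which is the crux of the whole argument. (A filter basis closed only under pairwise intersection suffices, since condition (b) of Definition~\ref{D: Filters} is stated for pairs.)
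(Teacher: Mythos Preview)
Your proposal is correct and follows essentially the same approach as the paper: define $\mathcal{F}$ as the upward closure of $G$ and verify the filter axioms directly. Your treatment of closure under intersection is in fact more carefully stated than the paper's, which glosses over the need to choose separate witnesses $B_1, B_2 \in G$ and intersect them.
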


\begin{proof}
	Let $\mathcal{F}=\{A\in\mathcal{P}(\mathbb N):C\subseteq A\text{ for some } C\in G\}$. Clearly $G\subseteq\mathcal{F}$. We conclude by showing that $\mathcal{F}$ is indeed a filter.
		\begin{description}
			\item[(i)] Suppose (to the contrary) that $\varnothing\in\mathcal{F}$. Then $C=\varnothing$, contradicting the fact that $G$ is a filter basis.
			\item[(ii)] Let $A,B\in\mathcal{F}$. Then $C\subseteq A\cap B$ and thus $A\cap B\in\mathcal{F}$.
			\item[(iii)] Let $A\in\mathcal{F}$ and let $B\in\mathcal{P}(\mathbb N)$ such that $A\subseteq B$. Then $C\subseteq B$. Thus $B\in\mathcal{F}$.
		\end{description}
\end{proof}


\section{Existence of Free Ultrafilters}\label{Sec: UltraExistence}
Though an explicit example of a free ultrafilter is not known, we can use the Axiom of Choice to prove that such a filter exists. The existence of free ultrafilters is of crucial importance to the construction of the non-standard real numbers.

\begin{theorem}
	Every free filter on $\mathbb N$ can be extended to a free ultrafilter on $\mathbb N$.
\end{theorem}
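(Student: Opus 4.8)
The plan is to invoke Zorn's Lemma on the collection of all filters that contain the given free filter, ordered by inclusion. Let $\mathcal{F}_0$ be a free filter on $\mathbb{N}$, and set
\[
\mathbb{P} = \{\mathcal{G} : \mathcal{G} \text{ is a filter on } \mathbb{N} \text{ and } \mathcal{F}_0 \subseteq \mathcal{G}\},
\]
partially ordered by set inclusion. This poset is non-empty, since $\mathcal{F}_0 \in \mathbb{P}$. I would then verify the hypothesis of Zorn's Lemma: every chain in $\mathbb{P}$ admits an upper bound lying in $\mathbb{P}$.

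Given a chain $\mathcal{C} \subseteq \mathbb{P}$, the natural candidate for an upper bound is the union $\mathcal{G}^* = \bigcup_{\mathcal{G} \in \mathcal{C}} \mathcal{G}$. I would check that $\mathcal{G}^*$ satisfies the three filter axioms of Definition~\ref{D: Filters}. Axioms (a) and (c) are immediate: $\varnothing$ lies in no member of $\mathcal{C}$, and upward closure is inherited from whichever single member contains a given set. The one step that genuinely uses that $\mathcal{C}$ is a chain is closure under finite intersection (axiom (b)): if $A, B \in \mathcal{G}^*$, then $A \in \mathcal{G}_1$ and $B \in \mathcal{G}_2$ for some $\mathcal{G}_1, \mathcal{G}_2 \in \mathcal{C}$, and since $\mathcal{C}$ is totally ordered we may assume $\mathcal{G}_1 \subseteq \mathcal{G}_2$, so that $A, B \in \mathcal{G}_2$ and hence $A \cap B \in \mathcal{G}_2 \subseteq \mathcal{G}^*$. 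Because each member of $\mathcal{C}$ contains $\mathcal{F}_0$, so does $\mathcal{G}^*$, and therefore $\mathcal{G}^* \in \mathbb{P}$ is the desired upper bound.

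Zorn's Lemma then yields a maximal element $\mathcal{U} \in \mathbb{P}$. I would argue that $\mathcal{U}$ is in fact an ultrafilter in the sense of Definition~\ref{D: Filters}, i.e.\ maximal among \emph{all} filters on $\mathbb{N}$ and not merely among those extending $\mathcal{F}_0$: if some filter $\mathcal{H}$ satisfied $\mathcal{U} \subsetneq \mathcal{H}$, then $\mathcal{F}_0 \subseteq \mathcal{U} \subseteq \mathcal{H}$ would place $\mathcal{H}$ in $\mathbb{P}$, contradicting the maximality of $\mathcal{U}$ there. Finally, freeness of $\mathcal{U}$ is inherited from $\mathcal{F}_0$ essentially for free: because $\mathcal{F}_0 \subseteq \mathcal{U}$, the intersection over the larger family is contained in the intersection over the smaller one, so $\bigcap_{A \in \mathcal{U}} A \subseteq \bigcap_{A \in \mathcal{F}_0} A = \varnothing$.

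I expect the only real subtlety to be the promotion from ``maximal in $\mathbb{P}$'' to ``maximal among all filters,'' which is exactly why I would set up $\mathbb{P}$ as the filters extending $\mathcal{F}_0$ rather than working with the entire lattice of filters; once that bookkeeping is in place the argument reduces to the routine Zorn's Lemma verification, and the defining free-ness condition (d) transfers automatically along the inclusion $\mathcal{F}_0 \subseteq \mathcal{U}$.
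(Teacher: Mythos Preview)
Your argument is correct and follows essentially the same Zorn's Lemma strategy as the paper: form the poset of filters extending $\mathcal{F}_0$, check that the union of a chain is an upper bound, and extract a maximal element. The only cosmetic difference is that the paper restricts its poset to \emph{free} filters (and so verifies freeness of the chain-union directly), whereas you work with all filters extending $\mathcal{F}_0$ and recover freeness at the end from the inclusion $\bigcap_{A\in\mathcal{U}}A\subseteq\bigcap_{A\in\mathcal{F}_0}A=\varnothing$; your version has the advantage of making explicit the step from ``maximal in $\mathbb{P}$'' to ``ultrafilter,'' which the paper leaves implicit.
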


\begin{proof}
	Let $\mathcal{F}_0$ be a free filter on $\mathbb N$ and let $\mathcal{S}$ denote the set of free filters on $\mathbb N$ containing $\mathcal{F}_0$,
		\[
			\mathcal{S}=\{\mathcal{F}:\mathcal{F}_0 \subseteq \mathcal{F} \text{ and } \mathcal{F} \text{ is a free filter}\}.
		\]
	Observe that $\mathcal{S} \ne\varnothing$ since $\mathcal{F}_0\in\mathcal{S}$ a priori. We now partially order $\mathcal{S}$ by set inclusion. 
	Let $\mathcal{C}$ be a chain in $\mathcal{S}$, such that $\forall \mathcal{F}_i\in\mathcal{C}\text{ and } 
	\forall\mathcal{F}_j\in\mathcal{C}$, either $\mathcal{F}_i\subseteq\mathcal{F}_j\text{ or }\mathcal{F}_j\subseteq\mathcal{F}_i$.\newline

	Let $\Gamma=\bigcup_{\mathcal{F}\in\mathcal{C}}\mathcal{F}$. To show that $\Gamma\in\mathcal{S}$, we must prove that $\Gamma$ is a free filter. Indeed,
	\begin{description}
		\item[(a)] Suppose (to the contrary) that $\varnothing\in\Gamma$. Since
 		\begin{math}
 			\Gamma=\bigcup_{\mathcal{F}\in\mathcal{C}}\mathcal{F},
		\end{math}
		then $\varnothing\in\mathcal{F}$ for some $\mathcal{F}\in\mathcal{C}$, contradicting the fact that $\mathcal{F}$ is a free filter.

		\item[(b)] Let $\mathcal{X}$ and $\mathcal{Y}$ be elements in $\Gamma$. Then $\mathcal{X}\in\mathcal{F}_i$ and $\mathcal{Y}\in\mathcal{F}_j$ for some $\mathcal{F}_i$ and $\mathcal{F}_j$ in $\mathcal{C}$. Since $\mathcal{C}$ is a chain,  		
		$\mathcal{X}\cap\mathcal{Y}\in\mathcal{F}_i\cup\mathcal{F}_j,$ and $\mathcal{F}_i\cup\mathcal{F}_j$ is also a filter (either $\mathcal{F}_i$ or $\mathcal{F}_j$),  which implies that $\mathcal{X}\cap\mathcal{Y}\in\Gamma$.

		\item[(c)] Let $\mathcal{X}\in\Gamma$ and let $\mathcal{Y}\in\mathcal{P}(\mathbb N)$ 
		Suppose $\mathcal{X}\subseteq\mathcal{Y},$ then $\mathcal{X}\in\mathcal{F},$ where $\mathcal{F}$ is in the union of $\Gamma$.
		Since $\mathcal{F}$ is a free filter and $\mathcal{X}\subseteq\mathcal{Y}$ then $\mathcal{Y}\in\mathcal{F}.$ Thus $\mathcal{Y}$ is in $\Gamma$.

		\item[(d)] Suppose to the contrary that
		\[
          		          m\in\bigcap_{\mathcal{X}\in\Gamma}\mathcal{X}
		\]
		for some \text{m}$\in\mathbb N$. Then $(\forall x\in\Gamma)(m\in\mathbb N)$, but this implies that for some $\mathcal{F}$ in $\mathcal{C}$
		\[
         		           m\in\bigcap_{\mathcal{X}\in\mathcal{F}}\mathcal{X}
		\]
		which contradicts the fact that $\mathcal{F}$  is a free filter.
	\end{description}

	Thus  $\Gamma\in\mathcal{S}$. Then for any chain in $\mathcal{S}$ there exists an upper bound $\Gamma$.
	Utilizing \text{Zorn's Lemma} we know that $\mathcal{S}$ contains a maximal element, say $\mathcal{U}$. By construction we know that $\mathcal{F}_0\subseteq\mathcal{U}$, thus proving that every free filter can be extended to a free ultrafilter.
\end{proof}

\section{Characterization of the Ultrafilter}

\begin{lemma}\label{Thm: union}
	Let $A_{1}, A_{2},\dots, A_{n}\in\mathcal{P(\mathbb N)}$ such that $A_{1}\cup A_{2}\cup\dots\cup A_{n}\in\mathcal{U},$ where $\mathcal{U}$ is an ultrafilter on $\mathbb N$. Then $A_{i}\in\mathcal{U}$ for at least one $i$. In addition, if the sets are mutually 		disjoint, then $A_{i}\in\mathcal{U}$ for exactly one $i$.
\end{lemma}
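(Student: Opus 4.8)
The plan is to reduce both assertions to a single structural feature of ultrafilters — the \emph{dichotomy property}: for every $S\in\mathcal{P}(\mathbb N)$, exactly one of $S\in\mathcal{U}$ or $\mathbb N\setminus S\in\mathcal{U}$ holds. Once this is in hand the lemma follows quickly, so the bulk of the work is establishing the dichotomy from the maximality clause of Definition~\ref{D: Filters}. I would therefore prove the dichotomy first, either as a preliminary claim or inline.

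To prove the dichotomy I fix $S$ with $S\notin\mathcal{U}$ and aim to show $\mathbb N\setminus S\in\mathcal{U}$. I split into two cases according to whether $S$ meets every member of $\mathcal{U}$. If $S\cap F\neq\varnothing$ for all $F\in\mathcal{U}$, I form the family $\mathcal{U}'=\{B\in\mathcal{P}(\mathbb N): F\cap S\subseteq B\text{ for some }F\in\mathcal{U}\}$ and verify it is a filter: the standing hypothesis $S\cap F\neq\varnothing$ is exactly what keeps $\varnothing\notin\mathcal{U}'$, while closure under finite intersections and supersets is routine. Since $\mathbb N\in\mathcal{U}$ (by upward closure applied to any member of the nonempty $\mathcal{U}$), we get $S=\mathbb N\cap S\in\mathcal{U}'$, and taking $F=B$ shows $\mathcal{U}\subseteq\mathcal{U}'$; hence $\mathcal{U}'$ is a filter properly containing $\mathcal{U}$, contradicting maximality. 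So the remaining case must hold: some $F\in\mathcal{U}$ satisfies $S\cap F=\varnothing$, i.e. $F\subseteq\mathbb N\setminus S$, whence $\mathbb N\setminus S\in\mathcal{U}$ by upward closure. The two alternatives are mutually exclusive because $S\cap(\mathbb N\setminus S)=\varnothing\notin\mathcal{U}$ while filters are closed under intersection.

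With the dichotomy secured, the first claim follows by contradiction: if none of $A_1,\dots,A_n$ lies in $\mathcal{U}$, then every complement $\mathbb N\setminus A_i$ does, and since $\mathcal{U}$ is closed under finite intersections so does $\bigcap_{i}(\mathbb N\setminus A_i)=\mathbb N\setminus\bigcup_{i}A_i$; but then both $\bigcup_i A_i$ and its complement lie in $\mathcal{U}$, forcing $\varnothing\in\mathcal{U}$, a contradiction. For the mutually disjoint case, at most one $A_i$ can belong to $\mathcal{U}$, for if $A_i,A_j\in\mathcal{U}$ with $i\neq j$ then $A_i\cap A_j=\varnothing\in\mathcal{U}$; combining this with the first claim yields exactly one. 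I expect the main obstacle to be the filter-extension step inside the dichotomy — packaging $\mathcal{U}\cup\{S\}$ into a genuine filter and checking that $\varnothing\notin\mathcal{U}'$ — since this is the one place where maximality is genuinely invoked; the remaining steps are bookkeeping with De Morgan's laws and the filter axioms.
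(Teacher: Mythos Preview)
Your proof is correct but takes a different route from the paper. The paper argues directly for $n=2$: assuming $A_1, A_2 \notin \mathcal{U}$, it builds the filter $\mathcal{M}=\{X\in\mathcal{P}(\mathbb N): A_1\cup X\in\mathcal{U}\}$, notes that $\mathcal{U}\subsetneq\mathcal{M}$ because $A_2\in\mathcal{M}\setminus\mathcal{U}$, and contradicts maximality; it then inducts for general $n$. The dichotomy property appears only afterward (Theorem~\ref{Thm: Char}), with its forward direction deduced \emph{from} this lemma. You reverse the dependency: you establish the dichotomy first via the standard generated filter $\mathcal{U}'=\{B: F\cap S\subseteq B\text{ for some }F\in\mathcal{U}\}$, and then the lemma drops out for all $n$ at once by De~Morgan and closure under finite intersection, with no induction required. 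Your approach is the more common textbook one and handles arbitrary $n$ uniformly; the paper's filter $\mathcal{M}$ is a slick one-shot device for $n=2$ that avoids your case split on whether $S$ meets every member of $\mathcal{U}$. The two arguments are the same maximality contradiction packaged in opposite orders.
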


\begin{proof}
	Let $A_{1}\cup A_{2}\in\mathcal{U}$. Suppose (to the contrary) that neither $A_{1}\in\mathcal{U}$ or $A_{2}\in\mathcal{U}$. Observe that 
	\[
		\mathcal{M}=\{X\in\mathcal{P}(\mathbb N): A_{1}\cup X\in\mathcal{U}\}
	\]
	is also a filter on $\mathbb N$. Notice that $\mathcal{U}\subseteq\mathcal{M}$ by property (c) in  Definition~\ref{Def: Frechet}. Also,
	$\mathcal{U}\subsetneq\mathcal{M}$ because $A_{2}\in\mathcal{M}\setminus\mathcal{U}$, contradicting the maximality of $\mathcal{U}$. Finally, if $A_{1}\cap A_{2} =\varnothing$ and  $A_{1},A_{2}\in\mathcal{U}$ then this implies that   		 
            $\varnothing\in\mathcal{U}$, a contradiction. The generalization to n $\ge 2$ follows simply by induction.
\end{proof}

\begin{theorem}\label{Thm: Char}
	Let $\mathcal{F}$ be a filter on $\mathbb N$. Then the following are equivalent:
	\begin{description}
		\item[(i)] $\mathcal{F}$ is maximal (ultrafilter).
		\item[(ii)]($\forall A\in\mathcal{P}(\mathbb N))$ ($A\in\mathcal{F}\text{ or } \mathbb N\setminus A \in\mathcal{F}$).
	\end{description}
\end{theorem}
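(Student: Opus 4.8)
The statement is an equivalence, so the plan is to prove the two implications separately, and in the forward direction I would lean on the union lemma (Lemma~\ref{Thm: union}) rather than re-deriving a filter extension by hand.

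For (i) $\Rightarrow$ (ii), I would first record the elementary fact that every filter contains $\mathbb{N}$: since $\mathcal{F}$ is non-empty there is some $A\in\mathcal{F}$, and $A\subseteq\mathbb{N}$ forces $\mathbb{N}\in\mathcal{F}$ by upward closure (property (c) of Definition~\ref{D: Filters}). Now I fix an arbitrary $A\in\mathcal{P}(\mathbb{N})$ and write $\mathbb{N}=A\cup(\mathbb{N}\setminus A)$. Since $\mathcal{F}$ is maximal it is an ultrafilter, so Lemma~\ref{Thm: union} applies to the two-set union $A\cup(\mathbb{N}\setminus A)\in\mathcal{F}$ and yields that at least one of $A$, $\mathbb{N}\setminus A$ lies in $\mathcal{F}$. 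Because these two sets are disjoint, the disjoint case of the lemma in fact gives that \emph{exactly} one of them belongs to $\mathcal{F}$, which is (ii).

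For (ii) $\Rightarrow$ (i), I would argue by contradiction. Suppose $\mathcal{F}$ is not maximal, so there is a filter $\mathcal{G}$ with $\mathcal{F}\subsetneq\mathcal{G}$; choose $B\in\mathcal{G}\setminus\mathcal{F}$. Since $B\notin\mathcal{F}$, hypothesis (ii) forces $\mathbb{N}\setminus B\in\mathcal{F}$, and hence $\mathbb{N}\setminus B\in\mathcal{G}$ as $\mathcal{F}\subseteq\mathcal{G}$. But then $\mathcal{G}$ contains both $B$ and $\mathbb{N}\setminus B$, so by closure under finite intersections it contains $B\cap(\mathbb{N}\setminus B)=\varnothing$, contradicting property (a) that $\varnothing\notin\mathcal{G}$. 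Hence no proper extension exists and $\mathcal{F}$ is maximal.

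The backward direction is short and self-contained; the only place demanding care is the forward direction. If one preferred not to cite Lemma~\ref{Thm: union}, the real work would reappear there: one would assume, toward a contradiction, that neither $A$ nor $\mathbb{N}\setminus A$ lies in $\mathcal{F}$, form the candidate extension $\{X : F\cap A\subseteq X \text{ for some } F\in\mathcal{F}\}$, and check it is a proper filter. The single nontrivial verification is that $\varnothing$ is excluded, i.e. that $F\cap A\neq\varnothing$ for every $F\in\mathcal{F}$; this is exactly where $\mathbb{N}\setminus A\notin\mathcal{F}$ is used, since $F\cap A=\varnothing$ would give $F\subseteq\mathbb{N}\setminus A$ and therefore $\mathbb{N}\setminus A\in\mathcal{F}$. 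I expect this disjointness and non-emptiness bookkeeping to be the main obstacle, and invoking the already-proved union lemma is precisely what lets me sidestep it.
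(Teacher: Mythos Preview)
Your proof is correct and follows essentially the same route as the paper: the forward direction invokes Lemma~\ref{Thm: union} on the decomposition $\mathbb{N}=A\cup(\mathbb{N}\setminus A)$, and the backward direction is the identical contradiction via $B\cap(\mathbb{N}\setminus B)=\varnothing$ in a proper extension. Your explicit justification that $\mathbb{N}\in\mathcal{F}$ and your closing remark on the alternative extension argument are additions, but the core argument matches the paper's.
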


\begin{proof}
\hspace{.5in}
	\begin{description}

		\item[(i)$\Rightarrow$(ii)]
		Let  $A\in\mathcal{P}(\mathbb N)$. Then Lemma \ref{Thm: union} holds since $A\cup(\mathbb N\setminus A)=\mathbb N\in\mathcal{U}$ and $A\cap(\mathbb N\setminus A)=\varnothing$.
		\item[(ii)$\Rightarrow$(i)]
		Suppose (to the contrary) that $\mathcal{F}$ is not maximal. Then $\mathcal{F}$  is properly contained in some free filter $\mathcal{M}$. 
		The complement of $\mathcal{M}\setminus\mathcal{F}$ will consist of some set $\mathcal{B}\in\mathcal{M}$ where
 		$\mathcal{B}\not\in\mathcal{F}$. By (ii) we know that $\mathbb N \setminus \mathcal{B}\in\mathcal{F}$. Since $\mathcal{F}\subset\mathcal{M}$ 
		then this implies that both $\mathcal{B}$ and $\mathbb N\setminus\mathcal{B}$ are in $\mathcal{M}$. Recall that $\mathcal{M}$ is a filter and 
		is therefore closed under intersections. Thus $\mathcal{B}\cap(\mathbb N\setminus\mathcal{B})=\varnothing\in\mathcal{M}$, contradicting the fact that $\mathcal{M}$ is a filter.
	\end{description}
\end{proof}

\begin{corollary}\label{Cor: FrechetNotMaximal}
	 The Fr\'{e}chet filter, $\mathcal F_r,$ (Example~\ref{Ex: Filters}) is not an ultrafilter.
\end{corollary}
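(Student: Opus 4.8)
The plan is to invoke the characterization of ultrafilters just established in Theorem~\ref{Thm: Char} and use its contrapositive. That theorem tells us that a filter $\mathcal{F}$ on $\mathbb{N}$ is maximal if and only if for every $A\in\mathcal{P}(\mathbb{N})$ either $A\in\mathcal{F}$ or $\mathbb{N}\setminus A\in\mathcal{F}$. Consequently, to show that $\mathcal{F}_r$ is \emph{not} an ultrafilter, it suffices to exhibit a single set $A\subseteq\mathbb{N}$ for which \emph{both} $A\notin\mathcal{F}_r$ and $\mathbb{N}\setminus A\notin\mathcal{F}_r$.

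The natural candidate is any set that splits $\mathbb{N}$ into two infinite halves, so that neither the set nor its complement is co-finite. Concretely, I would take $A$ to be the set of even natural numbers, so that $\mathbb{N}\setminus A$ is the set of odd natural numbers. First I would record that $A\in\mathcal{F}_r$ would require $\mathbb{N}\setminus A$ to be finite; but $\mathbb{N}\setminus A$ is the infinite set of odd numbers, so $A\notin\mathcal{F}_r$. By the identical argument applied to the complement, $\mathbb{N}\setminus A\in\mathcal{F}_r$ would force $A$, the infinite set of even numbers, to be finite, which is false; hence $\mathbb{N}\setminus A\notin\mathcal{F}_r$ as well.

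With such an $A$ in hand, condition (ii) of Theorem~\ref{Thm: Char} fails for $\mathcal{F}_r$, and therefore condition (i) fails too: $\mathcal{F}_r$ is not maximal, i.e.\ not an ultrafilter. There is essentially no obstacle here beyond the bookkeeping of checking that the chosen set and its complement are simultaneously infinite; the only point deserving a moment's care is to confirm that $\mathcal{F}_r$ genuinely is a filter (so that Theorem~\ref{Thm: Char} applies), but this is already recorded in Example~\ref{Ex: Filters}, where $\mathcal{F}_r$ is identified as a free filter.
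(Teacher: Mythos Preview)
Your proof is correct and uses essentially the same approach as the paper: both exhibit the even and odd natural numbers as a witness. The only cosmetic difference is that you phrase the conclusion via the contrapositive of Theorem~\ref{Thm: Char}, whereas the paper phrases it via Lemma~\ref{Thm: union} (noting $\mathbb{E}\cup\mathbb{O}=\mathbb{N}\in\mathcal{F}_r$ with neither piece in $\mathcal{F}_r$); since $\mathbb{O}=\mathbb{N}\setminus\mathbb{E}$, these are the same argument.
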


\begin{proof} 
	Let $\mathbb E$ and $\mathbb O$ denote the sets of the even and odd numbers in $\mathbb N$, respectively. 
            It is clear that $\mathbb E\cap\mathbb O=\varnothing$ and $\mathbb E\cup\mathbb O=\mathbb N\in\mathcal F_r$, but neither $\mathbb E$ nor 
	$\mathbb O$ belongs to $\mathcal F_r$.
\end{proof}

\begin{theorem}
	An ultrafilter $\mathcal{U}$ on $\mathbb N$ is free if and only if $\mathcal{F}_r\subset\mathcal{U}$, where $\mathcal{F}_r$ is the
	 Fr\'{e}chet filter on $\mathbb N$.
\end{theorem}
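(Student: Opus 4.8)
The plan is to prove the two implications separately, leaning on the ultrafilter characterization of Theorem~\ref{Thm: Char} and the union lemma, Lemma~\ref{Thm: union}, as the main tools. Throughout, $\mathcal{U}$ is assumed to be an ultrafilter, so both of these results are available.

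For the easier direction I would assume $\mathcal{F}_r \subset \mathcal{U}$ and deduce freeness. Since $\mathcal{F}_r \subseteq \mathcal{U}$, the intersection $\bigcap_{A \in \mathcal{U}} A$ is contained in $\bigcap_{A \in \mathcal{F}_r} A$, because intersecting over a larger collection of sets can only shrink the result. It then suffices to observe that $\bigcap_{A \in \mathcal{F}_r} A = \varnothing$: for each $n \in \mathbb{N}$ the co-finite set $\mathbb{N} \setminus \{n\}$ lies in $\mathcal{F}_r$ and omits $n$, so no point of $\mathbb{N}$ survives the intersection. Hence $\bigcap_{A \in \mathcal{U}} A = \varnothing$, which is exactly the statement that $\mathcal{U}$ is free.

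For the converse, assume $\mathcal{U}$ is a free ultrafilter. The crucial intermediate step, and the one I expect to carry the real content of the proof, is to show that \emph{no finite set can belong to} $\mathcal{U}$. First I would handle singletons: if $\{a\} \in \mathcal{U}$ for some $a \in \mathbb{N}$, then every $B \in \mathcal{U}$ must contain $a$, for otherwise $B \cap \{a\} = \varnothing$ would force $\varnothing \in \mathcal{U}$; but then $a \in \bigcap_{B \in \mathcal{U}} B$, contradicting freeness. For a general finite set $F = \{a_1, \ldots, a_k\}$, I would write $F = \{a_1\} \cup \dots \cup \{a_k\}$ and invoke Lemma~\ref{Thm: union}: if $F \in \mathcal{U}$, then some $\{a_i\} \in \mathcal{U}$, which the singleton case has just ruled out. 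Therefore $\mathcal{U}$ contains no finite set.

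With this in hand, the inclusion $\mathcal{F}_r \subseteq \mathcal{U}$ follows from Theorem~\ref{Thm: Char}: given a co-finite set $A$, its complement $\mathbb{N} \setminus A$ is finite and hence not in $\mathcal{U}$, so by maximality $A \in \mathcal{U}$. Finally, to upgrade this to the proper inclusion $\mathcal{F}_r \subsetneq \mathcal{U}$, I would exhibit a member of $\mathcal{U} \setminus \mathcal{F}_r$: neither the set of even numbers $\mathbb{E}$ nor the set of odd numbers $\mathbb{O}$ is co-finite, so neither lies in $\mathcal{F}_r$, yet Theorem~\ref{Thm: Char} places exactly one of them in $\mathcal{U}$, much as in Corollary~\ref{Cor: FrechetNotMaximal}. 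This witnesses the strict containment and completes the argument.
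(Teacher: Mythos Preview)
Your argument is correct and follows essentially the same route as the paper: the $(\Leftarrow)$ direction is identical (intersect over the larger family and use $\bigcap_{A\in\mathcal{F}_r}A=\varnothing$), and for $(\Rightarrow)$ both you and the paper reduce to ``a finite set lands in $\mathcal{U}$, contradicting freeness'' via Theorem~\ref{Thm: Char}. The only difference is that the paper simply asserts that a finite set in $\mathcal{U}$ contradicts freeness, whereas you actually justify this step (singletons first, then Lemma~\ref{Thm: union}) and also verify the strictness of the inclusion; in that sense your write-up is a bit more complete than the paper's own proof.
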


\begin{proof}
\hspace{.5in}
	\begin{description}
		\item[($\Rightarrow$)] Let $\mathcal{U}$ be a free ultrafilter on $\mathbb N$. Suppose (to the contrary) that $\mathcal{F}_r\not\subset\mathcal{U}$.
 		This implies that there exists an $S\in\mathcal{F}_r$ such that $S\not\in\mathcal{U}$. By Theorem, \ref{Thm: Char}, if 
		$S\not\in\mathcal{U}$ then the finite set, $\mathbb N\setminus S\in\mathcal{U}$, contradicting the fact that $\mathcal{U}$ is a free filter.
		\item[($\Leftarrow$)] Let $\mathcal{U}$ be an ultrafilter on $\mathbb N$ such that $\mathcal{F}_{r}\subset\mathcal{U}$. Suppose (to the contrary) that $\mathcal{U}$ is not free. Then there exists an $a\in\mathbb N$ such that
			\[
                                                 a\in\bigcap_{A\in\mathcal{U}}A\subseteq\bigcap_{A\in\mathcal{F}_{r}}A=\varnothing,
			\]
                        a contradiction.
	\end{description}
\end{proof}

This result allows us to easily check if a filter $\mathcal{U}$ is indeed a free ultrafilter, since if $A\in\mathcal{F}_r$ then $A\in\mathcal{U}$.

\chapter{The Fr\'{e}chet Filter in Real Analysis}

\section{Fr\'{e}chet Filter}
In this section we define what it means for a filter to be Fr\'{e}chet as well as the properties that characterize the  Fr\'{e}chet filter.

\begin{definition}[Fr\'{e}chet Filter]\label{Def: Frechet}
Let $\mathcal{F}_r$ denote the set of all cofinite subsets of $\mathbb N$, meaning
\[
\mathcal{F}_r = \{S\in\mathcal{P}(\mathbb N):\mathbb N\setminus S \text{ is finite}\}.
\]
We call $\mathcal{F}_r$ the Fr\'echet filter on $\mathbb N$.
\end{definition}

The following lemmas will serve to highlight the key properties of the Fr\'echet filter that shall be used in following sections.

\begin{lemma}\label{Lem1: Frechet}
Let $A\subseteq\mathbb N$. Then $A\in\mathcal{F}_r$ if and only if there exists $\nu\in\mathbb N$ such that $\{\nu,\nu +1, \nu +2,\dots\}\subseteq A$. In particular, $\{\nu,\nu +1, \nu +2,\dots\}\in\mathcal{F}_r$ for any $\nu\in\mathbb N$.
\end{lemma}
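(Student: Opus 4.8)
The plan is to prove the biconditional by unwinding the definition of $\mathcal{F}_r$ as the family of cofinite subsets of $\mathbb N$, invoking only the elementary fact that a finite subset of $\mathbb N$ is bounded above (equivalently, that the integers lying below a fixed $\nu$ form a finite set). No deeper machinery is needed, so I would keep both directions short and explicit.

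For the forward implication, I would assume $A\in\mathcal{F}_r$, so by definition $\mathbb N\setminus A$ is finite. If $\mathbb N\setminus A=\varnothing$ then $A=\mathbb N$ and any $\nu$ suffices. Otherwise $\mathbb N\setminus A$ is a nonempty finite subset of $\mathbb N$ and therefore has a largest element $m$; putting $\nu=m+1$, every $n\geq\nu$ must belong to $A$, since otherwise $n\in\mathbb N\setminus A$ would exceed $m$. Hence $\{\nu,\nu+1,\nu+2,\dots\}\subseteq A$, as required.

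For the converse, I would assume there is some $\nu\in\mathbb N$ with $\{\nu,\nu+1,\nu+2,\dots\}\subseteq A$. Passing to complements, $\mathbb N\setminus A$ is then contained in the set of natural numbers strictly below $\nu$, which is finite. Thus $\mathbb N\setminus A$ is finite and $A\in\mathcal{F}_r$ by definition.

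Finally, the \emph{``in particular''} assertion follows at once: applying the equivalence just established to the choice $A=\{\nu,\nu+1,\nu+2,\dots\}$, which trivially contains itself, gives $\{\nu,\nu+1,\nu+2,\dots\}\in\mathcal{F}_r$ for every $\nu$. I do not expect any genuine obstacle here; the single substantive ingredient is the boundedness of finite subsets of $\mathbb N$, which I would use without further comment.
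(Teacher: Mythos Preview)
Your proof is correct and follows essentially the same approach as the paper: both directions use that a finite subset of $\mathbb N$ lies below some bound, and the choice $\nu=m+1$ with $m$ the maximum of $\mathbb N\setminus A$ is exactly what the paper does. Your version is in fact slightly more careful, since you treat the case $\mathbb N\setminus A=\varnothing$ separately, whereas the paper implicitly assumes this set is nonempty when writing it as $\{a_1,\dots,a_m\}$.
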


\begin{proof}
\begin{description}
\item[($\Rightarrow$)] Let $A\in\mathcal{F}_r$. Then $\mathbb N\setminus A$ is finite and has the form
\[
\mathbb N\setminus A = \{a_1,a_2,\dots , a_m\}
\]
for some $m\in\mathbb N$ and $a_i\in\mathbb N$. The latter implies that $\mathbb N\setminus A\subseteq \{1,2,\dots , a_m\}$. Taking the complement once more, we have
\[
A\supseteq\{a_{m}+1,a_{m}+2,a_{m}+3,\dots\}.
\]
Thus $\{\nu,\nu+1,\nu+2,\dots\}\subseteq A$ holds for $\nu=a_m+1$.


\item[($\Leftarrow$)]  Let $A\in\mathcal{P}(\mathbb N)$ such that $\{\nu,\nu+1,\dots\}\subseteq A$, where $\nu\in\mathbb N$. Taking the complement, we have
\[
\mathbb N\setminus A\subseteq\{1,2,\dots,\nu-1\}.
\]
Thus $\mathbb N\setminus A$ is finite.

\end{description}
\end{proof}

\begin{lemma}\label{Lem2: Frechet}
$\mathcal{F}_r$ is a free filter in the sense that:
\begin{description}
\item[(i)] $\varnothing\not\in\mathcal{F}_r$.
\item[(ii)] $\mathcal{F}_r$ is closed under finitely many intersections.
\item[(iii)] Let $A\in\mathcal{F}_r$ and $B\subseteq\mathbb N$. Then $A\subseteq B$ implies that $B\in\mathcal{F}_r$.
\item[(iv)] $\bigcap_{A\in\mathcal{F}_r} A = \varnothing$.	
\end{description}
\end{lemma}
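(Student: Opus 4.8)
The plan is to verify each of the four defining properties of a free filter directly from the characterization of $\mathcal{F}_r$ as the collection of cofinite subsets of $\mathbb N$, leaning heavily on Lemma~\ref{Lem1: Frechet} wherever it streamlines the argument. Each part is a short computation with finite sets, so the work is routine; the only care needed is in being precise about what ``finite'' means and in handling the complements correctly.

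\begin{description}
\item[(i)] For $\varnothing\notin\mathcal{F}_r$, I would note that $\mathbb N\setminus\varnothing=\mathbb N$, which is infinite, so $\varnothing$ fails the cofiniteness condition and hence does not lie in $\mathcal{F}_r$.
\item[(ii)] For closure under finite intersection, it suffices by induction to treat two sets $A,B\in\mathcal{F}_r$. Then $\mathbb N\setminus A$ and $\mathbb N\setminus B$ are both finite, and by De Morgan's law $\mathbb N\setminus(A\cap B)=(\mathbb N\setminus A)\cup(\mathbb N\setminus B)$ is a union of two finite sets, hence finite; therefore $A\cap B\in\mathcal{F}_r$. Alternatively, Lemma~\ref{Lem1: Frechet} gives tails $\{\nu_A,\nu_A+1,\dots\}\subseteq A$ and $\{\nu_B,\nu_B+1,\dots\}\subseteq B$, and the tail starting at $\max\{\nu_A,\nu_B\}$ sits inside $A\cap B$, which again places $A\cap B$ in $\mathcal{F}_r$ by the same lemma.
\item[(iii)] For the superset property, suppose $A\in\mathcal{F}_r$ and $A\subseteq B\subseteq\mathbb N$. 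Then $\mathbb N\setminus B\subseteq\mathbb N\setminus A$, and a subset of a finite set is finite, so $\mathbb N\setminus B$ is finite and $B\in\mathcal{F}_r$.
\item[(iv)] For $\bigcap_{A\in\mathcal{F}_r}A=\varnothing$, I would argue that no natural number can survive the intersection: given any $k\in\mathbb N$, the set $\mathbb N\setminus\{k\}$ is cofinite, hence lies in $\mathcal{F}_r$, yet $k\notin\mathbb N\setminus\{k\}$, so $k$ is excluded from the intersection. Since $k$ was arbitrary, the intersection is empty.
\end{description}

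There is no genuine obstacle here; the statement is essentially a restatement of Definition~\ref{D: Filters} specialized to the Fr\'echet filter, together with the fact that the cofinite sets form a filter. The one place to be slightly careful is part~(iv), where the emptiness must be established by exhibiting, for each candidate element $k$, a specific cofinite set omitting $k$, rather than by any limiting or asymptotic reasoning. With that witness in hand the four verifications assemble immediately into the claim that $\mathcal{F}_r$ is a free filter.
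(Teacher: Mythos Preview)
Your proof is correct and follows essentially the same direct verification as the paper: parts (i) and (ii) are identical to the paper's argument, while in (iii) you take complements directly where the paper routes through Lemma~\ref{Lem1: Frechet}, and in (iv) you use the witness $\mathbb N\setminus\{k\}$ where the paper uses the tail $\{a+1,a+2,\dots\}$. These are cosmetic variations on the same elementary idea, and your argument goes through without any gaps.
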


\begin{proof}
\hspace{.5in}
\begin{description}
\item[(i)] Clearly, $\varnothing\not\in\mathcal{F}_r$ since $\mathbb N\setminus\varnothing = \mathbb N$ is infinite.
\item[(ii)] Let $A,B\in\mathcal{F}_r$ and define $A^{c}=\mathbb N\setminus A$ to be the complement of $A$ with respect to $\mathbb N$. Then $A\text{ and }$ B are cofinite sets. Thus
\[
 (A\cap B)^c = A^c\cup B^{c},
\]
which is clearly a finite set. Therefore $A\cap B\in\mathcal{F}_r$.
\item[(iii)] Let $A\in\mathcal{F}_r$ and $B\subseteq\mathbb N$ such that $A\subseteq B$. By Lemma \ref{Lem1: Frechet}
we know that $\{\mu, \mu +1, \mu +2,\dots\}\subseteq A$. Since $A\subseteq B$ then $\{\mu, \mu +1, \mu +2,\dots\}\subseteq B$, which implies that $B\in\mathcal{F}_r$.
\item[(iv)] Suppose, to the contrary, that 
\[
a\in\bigcap_{A\in\mathcal{F}_r} A,
\]
Then $a\in\{a+1,a+2,\dots\}$, a contradiction since $\{a+1,a+2,\dots\}\in\mathcal{F}_r$ by Lemma \ref{Lem1: Frechet}.
\end{description}
\end{proof}

\section{ Reduction in the Number of Quantifiers}

In this section, we demonstrate how characterizing sequence convergence in terms of the Fr\'echet filter leads to a reduction in the number of quantifiers from three to one.\\

Let $(a_n)$ be a sequence in $\mathbb R$, $L\in\mathbb R$ and $\varepsilon\in\mathbb R_+$. We denote:
\begin{equation}\label{E: The Set S}
S_\varepsilon=\{n\in\mathbb N: |a_n-L|<\varepsilon\}.
\end{equation}

\begin{theorem}\label{Thm: Limit}
Let $(a_n)$ be a sequence in $\mathbb R$ and let $L\in\mathbb R$ and $\varepsilon\in\mathbb R_+$. Then the following are equivalent:
\begin{description}
\item[(i)] $lim_{n\rightarrow\infty}a_n = L$ in the sense that
\[
 (\forall\varepsilon\in\mathbb R_+)(\exists\nu\in\mathbb N)(\forall n\in\mathbb N)(n\geq\nu \Rightarrow |a_n-L|<\varepsilon).
\]
\item[(ii)]  $(\forall \varepsilon\in\mathbb R_+)(S_\varepsilon\in\mathcal{F}_r)$, where $S_\varepsilon$ is the set (\ref{E: The Set S}).\\
\end{description}
\end{theorem}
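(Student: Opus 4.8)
The plan is to peel off the outer universal quantifier over $\varepsilon$ and reduce the stated equivalence to a single assertion about a fixed $\varepsilon$, where Lemma~\ref{Lem1: Frechet} does essentially all of the work. Since both (i) and (ii) have the syntactic form $(\forall\varepsilon\in\mathbb R_+)(\cdots)$, it suffices to fix an arbitrary $\varepsilon\in\mathbb R_+$ and prove that the inner assertion of (i),
\[
(\exists\nu\in\mathbb N)(\forall n\in\mathbb N)(n\geq\nu\Rightarrow |a_n-L|<\varepsilon),
\]
is equivalent to the single membership statement $S_\varepsilon\in\mathcal F_r$. The full equivalence of (i) and (ii) then follows by re-introducing the quantifier over $\varepsilon$.

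First I would rewrite the inner assertion of (i) in set-theoretic language. By the definition of $S_\varepsilon$ in (\ref{E: The Set S}), the implication $n\geq\nu\Rightarrow|a_n-L|<\varepsilon$ says precisely that $n\in S_\varepsilon$ whenever $n\geq\nu$; in other words, $\{\nu,\nu+1,\nu+2,\dots\}\subseteq S_\varepsilon$. Hence the inner assertion of (i) is logically identical to the statement that there exists some $\nu\in\mathbb N$ for which $\{\nu,\nu+1,\nu+2,\dots\}\subseteq S_\varepsilon$.

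The key step is then to invoke Lemma~\ref{Lem1: Frechet}, which asserts exactly that a subset $A\subseteq\mathbb N$ lies in $\mathcal F_r$ if and only if it contains a tail $\{\nu,\nu+1,\nu+2,\dots\}$ for some $\nu\in\mathbb N$. Applying this with $A=S_\varepsilon$ immediately converts the reformulated inner assertion into the condition $S_\varepsilon\in\mathcal F_r$, which finishes the fixed-$\varepsilon$ reduction in both directions at once.

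I do not expect a genuine obstacle here: once the implication inside (i) is recognized as the inclusion of a tail set, the theorem is a one-line consequence of Lemma~\ref{Lem1: Frechet}. The only point that warrants a little care is the bookkeeping of quantifiers, namely checking that the reduction is carried out for each fixed $\varepsilon$ \emph{before} the universal quantifier is reinstated, so that the two $\forall\varepsilon$ prefixes line up and the equivalence transfers faithfully from the inner assertions to the full statements (i) and (ii).
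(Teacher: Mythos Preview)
Your proposal is correct and follows essentially the same route as the paper: fix $\varepsilon$, rewrite the inner clause of (i) as the inclusion $\{\nu,\nu+1,\dots\}\subseteq S_\varepsilon$, and invoke Lemma~\ref{Lem1: Frechet}. The only cosmetic difference is that the paper treats (i)$\Rightarrow$(ii) and (ii)$\Rightarrow$(i) separately, whereas you handle both directions at once via the biconditional form of the lemma.
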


\begin{proof}
\hspace{.5in}
\begin{description}
\item[(i)$\Rightarrow$(ii)]: Let $\epsilon\in\mathbb R_+$ be chosen arbitrarily. By the assumption (i), we have 
\[
(\exists\nu\in\mathbb N) (\forall n\in\mathbb N) (n\ge\nu\Rightarrow |a_n - L|<\epsilon).
\]
Thus $\{\nu,\nu+1,\nu+2\dots\}\subseteq S_\epsilon$. The latter implies that $S_\epsilon \in\mathcal{F}_r$ by Lemma \ref{Lem1: Frechet}.

\item[(ii)$\Rightarrow$(i)]: Let $\epsilon\in\mathbb R_+$ be chosen arbitrarily. Then $S_\epsilon \in\mathcal{F}_r$ implies that 
$\{\nu,\nu+1,\nu+2,\dots\}\subseteq S_\epsilon$, for some $\nu\in\mathbb N$ by Lemma \ref{Lem1: Frechet}. We interpret this as
\[
\{n\in\mathbb N : n\ge\nu\}\subseteq\{n\in\mathbb N: |a_n - L|<\epsilon\}.
\]
The latter being equivalent to 
\[
(\forall\epsilon\in\mathbb R_+) (\exists\nu\in\mathbb N) (\forall n\in\mathbb N) (n\ge\nu \Rightarrow |a_n - L|<\epsilon),
\]
as required.
\end{description}
\end{proof}

\begin{corollary}[Negation]\label{C: Negation} Under the assumption of the above theorem, the following are equivalent:
\begin{description}
\item[(i)] $lim_{n\rightarrow\infty}a_n = L$ is false, that is
\[
 (\exists\varepsilon\in\mathbb R_+)(\forall\nu\in\mathbb N)(\exists n\in\mathbb N)(n\geq\nu \text{ and } |a_n-L|\geq\varepsilon).
\]
\item[(ii)]  $(\exists \varepsilon\in\mathbb R_+)(S_\varepsilon\notin\mathcal{F}_r)$, where $S_\varepsilon$ is the set (\ref{E: The Set S}).
\end{description}
\end{corollary}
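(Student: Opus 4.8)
The plan is to deduce this corollary directly from Theorem~\ref{Thm: Limit} by contraposition, since statement (i) here is precisely the logical negation of statement (i) there, and statement (ii) here is precisely the logical negation of statement (ii) there. The key observation is the propositional tautology that whenever two assertions $P$ and $Q$ satisfy $P \Leftrightarrow Q$, their negations satisfy $\neg P \Leftrightarrow \neg Q$. Thus no new analytic content is required: the whole argument reduces to (a) citing the equivalence already established and (b) verifying that the negations of the two sides have been transcribed correctly.

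First I would set $P$ to be the $\varepsilon$-$\nu$ definition of $\lim_{n\to\infty} a_n = L$ and set $Q$ to be the filter statement $(\forall\varepsilon\in\mathbb R_+)(S_\varepsilon\in\mathcal{F}_r)$, so that Theorem~\ref{Thm: Limit} reads exactly as $P \Leftrightarrow Q$. By the tautology above it then suffices to identify $\neg P$ with statement (i) of the corollary and $\neg Q$ with statement (ii).

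For $\neg Q$, negating the single universal quantifier gives $(\exists\varepsilon\in\mathbb R_+)\,\neg(S_\varepsilon\in\mathcal{F}_r)$, which is just $(\exists\varepsilon\in\mathbb R_+)(S_\varepsilon\notin\mathcal{F}_r)$, matching (ii). For $\neg P$, I would push the negation through the three nested quantifiers using the standard rules $\neg\forall=\exists\neg$ and $\neg\exists=\forall\neg$, and then negate the embedded implication via $\neg(A\Rightarrow B)=A\wedge\neg B$. Applied to $n\ge\nu \Rightarrow |a_n-L|<\varepsilon$, this yields $n\ge\nu$ together with $|a_n-L|\ge\varepsilon$, so that $\neg P$ becomes $(\exists\varepsilon\in\mathbb R_+)(\forall\nu\in\mathbb N)(\exists n\in\mathbb N)(n\ge\nu \text{ and } |a_n-L|\ge\varepsilon)$, matching (i).

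The only point requiring any attention — and the closest thing to an obstacle — is the purely syntactic bookkeeping of negating the nested quantifier block and the implication inside it; everything else is an immediate appeal to Theorem~\ref{Thm: Limit}. In particular, no property of the Fr\'echet filter beyond what is already encoded in that theorem needs to be invoked, which is exactly the point: the reduction to a single quantifier makes even the negated form of the limit statement transparent.
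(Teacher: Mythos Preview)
Your proposal is correct and matches the paper's treatment: the paper states the corollary without proof, treating it as the immediate logical negation of Theorem~\ref{Thm: Limit}, which is exactly what you spell out.
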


\section{Fr\'echet filter in Real Analysis}
In this section we show how the characterization of the limit in terms of $\mathcal{F}_{r}$ works in practice.

\begin{theorem}[Squeeze Theorem]\label{Thm: sqz}
Let $(a_n), (b_n), (x_n)\in\mathbb R^\mathbb N$. Let $a_{n}\leq x_{n}\leq b_{n}$ hold for all sufficiently large $n$ and let $lim_{n\rightarrow\infty} a_n = \lim_{n\rightarrow\infty} b_n= L$. Then $\lim_{n\rightarrow\infty} x_n = L$.
\end{theorem}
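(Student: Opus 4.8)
The plan is to reduce every hypothesis and the conclusion to statements about membership in the Fr\'echet filter and then let its closure properties do the work, thereby avoiding the usual three-quantifier $\varepsilon$-chase entirely. I would fix $\varepsilon\in\mathbb R_+$ arbitrarily and introduce the three sets
\[
A_\varepsilon=\{n\in\mathbb N:|a_n-L|<\varepsilon\},\quad B_\varepsilon=\{n\in\mathbb N:|b_n-L|<\varepsilon\},\quad X_\varepsilon=\{n\in\mathbb N:|x_n-L|<\varepsilon\},
\]
together with the order set $T=\{n\in\mathbb N:a_n\leq x_n\leq b_n\}$. By Theorem~\ref{Thm: Limit} applied to the sequence $(x_n)$, the goal is exactly to show $X_\varepsilon\in\mathcal{F}_r$ for every such $\varepsilon$.

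First I would convert the three hypotheses into filter membership. Since $\lim_{n\to\infty}a_n=\lim_{n\to\infty}b_n=L$, Theorem~\ref{Thm: Limit} gives $A_\varepsilon\in\mathcal{F}_r$ and $B_\varepsilon\in\mathcal{F}_r$ immediately. The assumption that $a_n\leq x_n\leq b_n$ holds \emph{for all sufficiently large} $n$ means precisely that $\{\nu,\nu+1,\nu+2,\dots\}\subseteq T$ for some $\nu\in\mathbb N$, so Lemma~\ref{Lem1: Frechet} yields $T\in\mathcal{F}_r$ as well.

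The only genuinely pointwise step is the following: on the intersection $A_\varepsilon\cap B_\varepsilon\cap T$ the three inequalities combine as $L-\varepsilon<a_n\leq x_n\leq b_n<L+\varepsilon$, whence $|x_n-L|<\varepsilon$. This establishes the set inclusion $A_\varepsilon\cap B_\varepsilon\cap T\subseteq X_\varepsilon$. Now the filter structure finishes the argument: by closure under finite intersections (Lemma~\ref{Lem2: Frechet}(ii)) the left-hand side belongs to $\mathcal{F}_r$, and by the superset property (Lemma~\ref{Lem2: Frechet}(iii)) so does $X_\varepsilon$. As $\varepsilon\in\mathbb R_+$ was arbitrary, a final application of Theorem~\ref{Thm: Limit} delivers $\lim_{n\to\infty}x_n=L$.

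I expect the main obstacle to be conceptual rather than technical: one must correctly translate the informal phrase ``for all sufficiently large $n$'' into the single filter statement $T\in\mathcal{F}_r$, and recognize that the classical manipulation of two separate thresholds $\nu_a,\nu_b$ followed by taking their maximum is exactly what closure of $\mathcal{F}_r$ under finite intersection encapsulates. Once the inclusion $A_\varepsilon\cap B_\varepsilon\cap T\subseteq X_\varepsilon$ is recorded, no further $\varepsilon$-estimates are required, which is the whole point of the Fr\'echet-filter characterization.
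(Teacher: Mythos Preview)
Your proof is correct and follows essentially the same approach as the paper: introduce the order set and the two $\varepsilon$-sets, note they all lie in $\mathcal{F}_r$, intersect, and conclude via Theorem~\ref{Thm: Limit}. If anything, your write-up is slightly more careful---you state the inclusion $A_\varepsilon\cap B_\varepsilon\cap T\subseteq X_\varepsilon$ and then invoke the superset property (Lemma~\ref{Lem2: Frechet}(iii)), whereas the paper asserts an equality there that is in fact only an inclusion.
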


\begin{proof}
Let $\epsilon\in\mathbb R_+$. We define the following sets
\begin{description}
\item $X=\{n\in\mathbb N : a_n\leq x_n \leq b_n\}$.
\item $A_{\epsilon}=\{n\in\mathbb N : L -\epsilon < a_n < L+\epsilon\}$.
\item $B_{\epsilon}=\{n\in\mathbb N : L - \epsilon < b_n < L+\epsilon\}$.
\end{description}
By assumption we know that $X, A_{\epsilon},\text{ and } B_{\epsilon}$ are all members of $\mathcal{F}_r$. Since the Fr\'echet filter is closed under finite intersections, then $X\cap A_{\epsilon}\cap B_{\epsilon}\in\mathcal{F}_r$. However, 
\[
X\cap A_{\epsilon} \cap B_{\epsilon} = \{n\in\mathbb N : L-\epsilon < x_n < L+\epsilon\}.
\]
Thus, by Theorem \ref{Thm: Limit}, we see that $\lim_{n\rightarrow\infty} x_n = L$.
\end{proof}

\begin{theorem}\label{Thm:com}
The limit operation preserves order in the sense that \\
 if $a_n \leq b_n$ for all sufficiently large $n$ and $\lim_{n\rightarrow\infty} a_n =a ,\lim_{n\rightarrow\infty} b_n = b$,
then $a\leq b$.
\end{theorem}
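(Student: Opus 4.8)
The plan is to proceed by contradiction and to let the Fr\'echet filter do the bookkeeping that the three-quantifier definition would otherwise force us to carry out by hand. The decisive structural facts are that $\mathcal{F}_r$ is closed under finite intersections and that $\varnothing\notin\mathcal{F}_r$ (Lemma \ref{Lem2: Frechet}); together these guarantee that a finite family of sets, each lying in $\mathcal{F}_r$, has nonempty intersection, so that finitely many ``eventually true'' statements must all hold simultaneously at some common index $n$. This is precisely the feature that replaces the delicate threading of the $\exists\nu$ quantifiers in the classical argument.

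Concretely, I would assume toward a contradiction that $a > b$ and set $\varepsilon = (a-b)/2 \in \mathbb{R}_+$; this is the only quantitative choice in the proof, and it is engineered so that $a-\varepsilon = b+\varepsilon = (a+b)/2$. I then form three sets: the hypothesis set $X = \{n\in\mathbb{N} : a_n \leq b_n\}$, which belongs to $\mathcal{F}_r$ because the inequality holds for all sufficiently large $n$ (Lemma \ref{Lem1: Frechet}); and the two convergence sets $A_\varepsilon = \{n\in\mathbb{N} : |a_n - a| < \varepsilon\}$ and $B_\varepsilon = \{n\in\mathbb{N} : |b_n - b| < \varepsilon\}$, both of which belong to $\mathcal{F}_r$ by the characterization of limits in Theorem \ref{Thm: Limit} applied to the hypotheses $\lim_{n\to\infty} a_n = a$ and $\lim_{n\to\infty} b_n = b$.

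Next I would intersect. By closure under finite intersections, $X\cap A_\varepsilon\cap B_\varepsilon \in \mathcal{F}_r$, and since $\varnothing\notin\mathcal{F}_r$ this intersection is nonempty; fix any $n$ in it. For that single index we simultaneously have $a_n \leq b_n$ (from $X$), $a_n > a-\varepsilon = (a+b)/2$ (from $A_\varepsilon$), and $b_n < b+\varepsilon = (a+b)/2$ (from $B_\varepsilon$), whence $b_n < (a+b)/2 < a_n$, contradicting $a_n \leq b_n$. This contradiction forces $a \leq b$.

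The argument is essentially mechanical once the filter machinery is in place, so I do not expect a serious technical obstacle; the only genuine content is the separating choice of $\varepsilon$. If I had to name the main subtlety, it is the conceptual point that the conclusion is the \emph{nonstrict} inequality $a\leq b$ even when the hypothesis inequality is itself only nonstrict. The proof must therefore not attempt to establish $a < b$: the contradiction only rules out $a > b$, and equality is genuinely possible (taking $a_n = b_n$ shows it can occur). The filter formulation makes the simultaneity of the three conditions immediate, which is exactly the step that the conventional proof must arrange by combining several thresholds $\nu$ into one.
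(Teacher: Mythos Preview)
Your proof is correct and follows essentially the same approach as the paper: argue by contradiction with $\varepsilon=(a-b)/2$, use Theorem~\ref{Thm: Limit} to place the convergence sets in $\mathcal{F}_r$, and exploit closure under finite intersections. The only cosmetic difference is that the paper shows $A_\varepsilon\cap B_\varepsilon\subseteq\{n:b_n<a_n\}$ and argues this set's complement cannot be cofinite, whereas you fold the hypothesis set $X=\{n:a_n\le b_n\}$ directly into the intersection and extract the contradiction at a single index; both are the same argument.
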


\begin{proof}
Suppose, to the contrary, that $\lim_{n\rightarrow\infty} a_n > \lim_{n\rightarrow\infty} b_n$. Let $\epsilon = \frac{a-b}{2}$. We define the following sets:

\begin{description}
\item $A:=\{n\in\mathbb N : |a_n - a| < \frac{a-b}{2}\}\in\mathcal{F}_r$,
\item $B:=\{n\in\mathbb N : |b_n - b| < \frac{a-b}{2}\}\in\mathcal{F}_r$.
\item $C:=\{n\in\mathbb N : b_n < a_n\}$.
\end{description}
The sets $A$ and $B$ can be rewritten as
\begin{description}
\item $A=\{n\in\mathbb N: \frac{a+b}{2} < a_n < \frac{3a - b}{2}\}$.
\item $B=\{n\in\mathbb N: \frac{2b-a}{2} < b_n < \frac{a+b}{2}\}$.
\end{description}
Next, we observe that $A\cap B\subseteq C$. Indeed, $n\in A\cap B$ implies that $b_n < \frac{a+b}{2} < a_n$ or in other words $n\in C$. Thus $C\in\mathcal{F}_r$, but this implies that $\mathbb N\setminus C$ is finite, contradicting the fact that $a_n \leq b_n$ almost everywhere.
\end{proof}

The above examples demonstrate how the Fr\'echet filter slightly simplifies some proofs in analysis. The main advantage, in the opinion of the author, is that it does away with limit arguments, which typically are a source of confusion among beginning students. Instead, the proofs can be easily completed using the basics of set theory.

\section{Remarks Regarding the Fr\'echet Filter}
As we have shown, the  Fr\'{e}chet filter can be used to reduce the number of quantifiers needed in the real analysis. This result leads us to wonder if it would be beneficial to construct a number system where the elements are imbued with the properties of the  Fr\'{e}chet filter. This new system would be constructed in a manner similar to Cauchy's construction of the real numbers from rational sequences. The elements in this new system would be equivalence classes of real numbered sequences, which take into account sequence convergence (divergence) as well as the rate of convergence (divergence). Ideally, the resulting system will contain elements that can be used to characterize convergence in such a manner that we can do away with the limits of standard analysis or the set constructions from the  Fr\'{e}chet approach.

Let us consider the factor ring
\[
\tilde{\mathbb R}^\mathbb N = \mathbb R ^\mathbb N/ \sim_{\mathcal{F}_{r}}
\]
where $ \sim_{\mathcal{F}_{r}}$ is the equivalence relation defined by
\[
 (a_n){ \sim_{\mathcal{F}_{r}}}(b_n)  \text{ if and only if } \{n:a_n = b_n\}\in\mathcal{F}_r.
\]
 This is no different to saying that $(a_n)$ is equivalent to $(b_n)$ if and only if $a_n = b_n$ for all sufficiently large n. Thus the elements in our new system are equivalence classes of real sequences, denoted by $\bra a_{n} \ket$.
We now define the relevant operations and order of our new system.

\begin{definition}\label{Frechet: Order}
$(a_n) \le (b_n)$ if and only if $\{n: a_n \le b_n\}\in\mathcal{F}_r$
\end{definition}

\begin{definition}\label{Frechet: AM}
Let ${x}\text{ and }{y}$ be elements in $\tilde{\mathbb R}^\mathbb N$ such that ${x}=\bra x_n \ket$ and 
${y}=\bra y_n \ket$. Then we have the following operations:
\begin{enumerate}
\item ${x+y}$ = $\bra x_n + y_n \ket$.
\item ${x\cdot y}$ = $\bra x_n \cdot y_n \ket$.
\item ${x\le y}$ if and only if $(x_n) \le (y_n)$.
\end{enumerate}
\end{definition}

It is easy to show that the above operations are well defined.\\

Does our new structure live up to our lofty ambitions? The sad fact is no. The new construction we have devised is no better than $\mathbb R^\mathbb N$. At most,  $\tilde{\mathbb R}^\mathbb N$ is a partially ordered ring with zero divisors. For example,

\[
\bra 1,0,1,\dots\ket\cdot \bra 0,1,0,\dots\ket = \bra 0,0,0,0,\dots\ket.
\]

Even worse, this new construction does not posses the law of the excluded middle, leading to elements which cannot be ordered relative to one another. For example, neither $\bra 1,0,1,\dots\ket\le\bra 0,1,0,\dots\ket$, nor 
$\bra 0,1,0,\dots\ket \le\bra 1,0,1,\dots\ket$ are true statements, as demonstrated by Corollary \ref{Cor: FrechetNotMaximal}.

Clearly, the Fr\'echet construction is inferior to $\mathbb R$ and cannot be applied to real analysis. However, not all is lost. Indeed, we can strengthen the Fr\'echet filter by extending it to a free ultrafilter, as shown in section \ref{Sec: UltraExistence}. This extension allows us to transform  $\tilde{\mathbb R}^\mathbb N$ into a totally ordered field known as the non-standard real numbers, denoted by  $^*\mathbb R$. The next chapter shall show that this new structure will be an extension of $\mathbb R$ and shall posses several unique properties that greatly simplify our work in the real analysis.

\chapter{Non-standard Analysis}

\section{Construction of the Hyperreals $^*\mathbb R$}\label{Sec: Const}
The construction of $^*\mathbb R$ is reminiscent of the construction of the reals from the rationals by means of equivalence classes of Cauchy Sequences.
To begin, we start with $\mathbb R^\mathbb N$, which is the set of sequences of real numbers. Each member of $\mathbb R ^\mathbb N$ has the form
\[
X=(x_{n}:n\in\mathbb N)
\]
or for simplicity, $(x_{n})$. $\mathbb R^\mathbb N$ is considered to be a commutative ring with unity under the usual operations of pointwise addition and multiplication. Furthermore, $\mathbb R ^\mathbb N$ is partially ordered under the following relation
\[
(x_{n})\le (y_{n})\text{ if and only if } \{n:x_{n}\le y_{n}\}\in\mathcal{F}_{r}.
\]
Despite it's rich structural properties, $\mathbb R ^\mathbb N$ fails to be a totally ordered field due to the existence of zero divisors
\[
(1,0,1,0,\ldots) \cdot (0,1,0,1,\ldots) = (0,0,0,\ldots),
\]
as well as the existence of elements that cannot be ordered.
To rectify this situation we shall define an equivalence relation on $\mathbb R ^\mathbb N$, creating a new set $^*\mathbb R$, as well as defining new operations which will make $^*\mathbb R$ into a linearly ordered field.\\

Let $\mathcal{U}$ be a free ultrafilter on $\mathbb N$. We define a relation, $\equiv$, on $\mathbb R ^\mathbb N$ as follows.

\begin{definition}
If $X=(x_{n})$ and $Y=(y_{n})$ are in $\mathbb R ^\mathbb N$, then
\[
 (x_{n}) \equiv (y_{n}) \text{ if and only if } \{n\in\mathbb N: x_{n}=y_{n}\}\in\mathcal{U}
\]
\end{definition}

\begin{lemma}
The relation ,$\equiv$, is an equivalence relation on $\mathbb R ^\mathbb N$.
\end{lemma}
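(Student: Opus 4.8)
The plan is to verify the three defining properties of an equivalence relation—reflexivity, symmetry, and transitivity—for the relation $\equiv$ on $\mathbb{R}^\mathbb{N}$. In each case the strategy is the same: translate the statement about sequences into a statement about membership of an ``agreement set'' $\{n \in \mathbb{N} : \cdots\}$ in the ultrafilter $\mathcal{U}$, and then invoke the filter axioms from Definition~\ref{D: Filters} together with the ultrafilter characterization of Theorem~\ref{Thm: Char}. The key observation throughout is that $\mathcal{U}$, being an ultrafilter, is in particular a filter, so it contains $\mathbb{N}$ (since $\mathbb{N}$ is a superset of any nonempty member of $\mathcal{U}$, using closure under supersets, property (c)), is closed under finite intersections (property (b)), and is upward closed (property (c)).

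First I would handle reflexivity. For any $X = (x_n) \in \mathbb{R}^\mathbb{N}$ we have $x_n = x_n$ for every $n$, so the agreement set is $\{n \in \mathbb{N} : x_n = x_n\} = \mathbb{N}$. Since $\mathcal{U}$ is a filter it must contain $\mathbb{N}$, hence $X \equiv X$. Next, symmetry is essentially immediate: the set $\{n : x_n = y_n\}$ is literally the same set as $\{n : y_n = x_n\}$ because equality of real numbers is symmetric, so $X \equiv Y$ and $Y \equiv X$ are the identical assertion and nothing further is needed.

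The substantive step is transitivity, and I expect this to be the main (though still modest) obstacle. Suppose $X \equiv Y$ and $Y \equiv Z$, so that the sets $A = \{n : x_n = y_n\}$ and $B = \{n : y_n = z_n\}$ both lie in $\mathcal{U}$. The crucial set-theoretic inclusion is
\[
A \cap B = \{n : x_n = y_n\} \cap \{n : y_n = z_n\} \subseteq \{n : x_n = z_n\},
\]
which holds because on any index $n$ belonging to both $A$ and $B$ we have $x_n = y_n = z_n$ by transitivity of equality in $\mathbb{R}$. Since $\mathcal{U}$ is closed under finite intersections, $A \cap B \in \mathcal{U}$; and since $\mathcal{U}$ is upward closed, the superset $\{n : x_n = z_n\}$ also lies in $\mathcal{U}$, giving $X \equiv Z$.

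Having established all three properties, I would conclude that $\equiv$ is an equivalence relation on $\mathbb{R}^\mathbb{N}$. I would emphasize that the proof uses only the filter axioms—not the maximality of $\mathcal{U}$—so the same argument shows that $\sim_{\mathcal{F}_r}$ from the previous chapter is an equivalence relation as well; the ultrafilter structure becomes essential only later, when one must show that the quotient is totally ordered and free of zero divisors, where Theorem~\ref{Thm: Char} and Lemma~\ref{Thm: union} are invoked.
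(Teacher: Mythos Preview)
Your proof is correct and follows essentially the same approach as the paper: reflexivity via $\{n:x_n=x_n\}=\mathbb N\in\mathcal U$, symmetry via equality of the two agreement sets, and transitivity via closure under intersection followed by upward closure. Your concluding remark that only the filter axioms (not maximality) are used is accurate and a worthwhile observation, though the paper does not make it explicit.
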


\begin{proof}
\begin{description}
\item[Reflexive:] Let X = $(x_{n})$ $\in\mathbb R ^\mathbb N$. Then $\{n\in\mathbb N:x_{n}=x_{n}\}=\mathbb N\in\mathcal{U}.$ Thus $(x_{n})$ $\equiv$ $(x_{n})$.

\item[Symmetric:] Let X = $(x_{n})$ and let Y = $(y_{n})$ be elements of $\mathbb R ^\mathbb N$ such that $(x_{n})\equiv(y_{n})$, which implies that $\{n\in\mathbb N:x_{n}=y_{n}\}\in\mathcal{U}.$ By the symmetry of = on 
$\mathbb R$ we see that $\{n\in\mathbb N:x_{n}=y_{n}\}=\{n\in\mathbb N:y_{n}=x_{n}\}$. Thus $(y_{n})\equiv(x_{n})$.

\item[Transitive:] Let X = $(x_{n})$, Y = $(y_{n})$, and Z = $(z_{n})$ such that $(x_{n})\equiv(y_{n})$ and 
$(y_{n})\equiv(z_{n})$. Let $\mathcal{S}_{1}=\{n\in\mathbb N:x_{n}=y_{n}\}$ and let $\mathcal{S}_{2}=\{n\in\mathbb N:y_{n}=z_{n}\}$, both of which are members of $\mathcal{U}$. Since $\mathcal{U}$ is closed under intersections
\[
\mathcal{S}_{1}\cap\mathcal{S}_{2}=\{n\in\mathbb N:x_{n}=y_{n}\text{ and } y_{n}=z_{n}\}\in\mathcal{U}.
\]
Then $\mathcal{S}_{1}\cap\mathcal{S}_{2}\subseteq\{n\in\mathbb N:x_{n}=z_{n}\}$ implies that $\{n\in\mathbb N:x_{n}=z_{n}\}\in\mathcal{U}$. Thus $(x_{n})\equiv(z_{n})$.
\end{description}
\end{proof}

This method is known as the ultraproduct construction of the set of  nonstandard or hyperreal numbers, which are denoted by $^*\mathbb R$. We now introduce the operations on $^* \mathbb R$.

\begin{definition}
Let $x$ and $y$ be elements in $^*\mathbb R$ such that $x$ = $\bra x_{n}\ket$ and \newline $y$ = $\bra y_{n}\ket$. Then we have the following operations:
\begin{enumerate}
\item ${x+y}$ = $\bra x_{n}+ y_{n}\ket$.
\item ${x\cdot y}$ = $\bra x_{n}\cdot y_{n} \ket$.
\item ${x<y}$ if and only if $\{n\in\mathbb N: x_{n}<y_{n}\}\in\mathcal{U}$ and ${x\le y}$ if and only if ${x<y}$ or ${x=y}$.
\end{enumerate}
\end{definition}
%
%
%
%
%

\begin{theorem}
$^*\mathbb R$ is a linearly ordered field.
\end{theorem}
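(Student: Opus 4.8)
The plan is to verify in turn that the coordinatewise operations descend to well-defined operations on the quotient $^*\mathbb R = \mathbb R^{\mathbb N}/\!\equiv$, that these make $^*\mathbb R$ into a commutative ring with unity, that every nonzero class is invertible, and finally that the relation $<$ is a strict linear order compatible with the algebra. Throughout, the recurring mechanism is the same: an identity or inequality that holds on a set of indices belonging to $\mathcal{U}$ passes to the quotient, because $\mathcal{U}$ is closed under finite intersection and supersets; the delicate points (invertibility and trichotomy) are exactly where the maximality of $\mathcal{U}$ --- as captured by Theorem \ref{Thm: Char} and Lemma \ref{Thm: union} --- does the real work.

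First I would check well-definedness. If $(x_n)\equiv(x_n')$ and $(y_n)\equiv(y_n')$, then $A=\{n:x_n=x_n'\}$ and $B=\{n:y_n=y_n'\}$ lie in $\mathcal{U}$, so $A\cap B\in\mathcal{U}$; since $A\cap B$ is contained in both $\{n:x_n+y_n=x_n'+y_n'\}$ and $\{n:x_ny_n=x_n'y_n'\}$, upward closure of $\mathcal{U}$ makes the sum and product classes independent of representatives, and the same intersection argument handles the order. The ring axioms then require no separate labor: associativity, commutativity, distributivity, and the identity laws are equations holding in every coordinate, i.e.\ on all of $\mathbb N\in\mathcal{U}$, with additive identity $\langle 0,0,\dots\rangle$, additive inverse $\langle -x_n\rangle$ of $\langle x_n\rangle$, and unit $\langle 1,1,\dots\rangle$. (Equivalently, $\equiv$ is the ring congruence whose kernel is the ideal $\{(x_n):\{n:x_n=0\}\in\mathcal{U}\}$, so the quotient is automatically a ring.) I would also record nontriviality: $\{n:0=1\}=\varnothing\notin\mathcal{U}$, so $\langle 0\rangle\neq\langle 1\rangle$.

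The substantive step is the existence of multiplicative inverses, and here I would invoke Theorem \ref{Thm: Char}. Suppose $x=\langle x_n\rangle\neq 0$; this means $\{n:x_n=0\}\notin\mathcal{U}$, so by the ultrafilter characterization its complement $Z=\{n:x_n\neq 0\}$ belongs to $\mathcal{U}$. Defining $y_n=1/x_n$ for $n\in Z$ and $y_n=0$ otherwise, we get $\{n:x_ny_n=1\}\supseteq Z\in\mathcal{U}$, hence $\langle x_n\rangle\langle y_n\rangle=\langle 1\rangle$ and $x$ is a unit. This is precisely the point at which a general free filter fails, as the zero-divisor examples of the previous chapter show, and where maximality is indispensable; I expect this to be the main obstacle, or at least the crux where the hypothesis is genuinely used.

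Finally I would establish that $<$ is a strict linear order compatible with the field structure. For trichotomy, fix $x=\langle x_n\rangle$ and $y=\langle y_n\rangle$ and partition $\mathbb N$ into the three disjoint sets $\{n:x_n<y_n\}$, $\{n:x_n=y_n\}$, $\{n:x_n>y_n\}$; by the disjoint case of Lemma \ref{Thm: union}, exactly one of them lies in $\mathcal{U}$, which says exactly one of $x<y$, $x=y$, $x>y$ holds. Transitivity, together with the compatibility laws $x<y\Rightarrow x+z<y+z$ and $(x<y,\ 0<z)\Rightarrow xz<yz$, follows in each case from the intersection argument: the relevant pair of index sets lies in $\mathcal{U}$, hence so does their intersection, which is contained in the index set defining the desired conclusion. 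Assembling the ring axioms, invertibility, and these order properties yields that $^*\mathbb R$ is a linearly ordered field.
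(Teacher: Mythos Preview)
Your proof is correct and follows essentially the same approach as the paper: invertibility of a nonzero class via Theorem~\ref{Thm: Char} (complement of the zero-set lies in $\mathcal{U}$, then patch together a coordinatewise inverse) and trichotomy via the disjoint case of Lemma~\ref{Thm: union} applied to the three-way partition of $\mathbb N$. You are in fact more thorough than the paper, which simply asserts that $^*\mathbb R$ is already a partially ordered commutative ring with unity and omits the well-definedness and order-compatibility checks that you carry out explicitly.
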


\begin{proof}
$^*\mathbb R$ is already a partially ordered commutative ring with unity. To show that $^*\mathbb R$ is a field, suppose that $X=\bra x_{n}\ket \in { ^*\mathbb R}$ such that ${\bra x_{n}\ket}\not = 0$. Then $\{n\in\mathbb N : x_{n}=0\}\not\in\mathcal{U}$ and so 
$\{n\in\mathbb N : x_{n}\not = 0\}\in\mathcal{U}$ by Theorem \ref{Thm: Char}. We define $X^{-1}=\bra\bar{x}^{-1}_{n}\ket$, where $\bar{x}^{-1}_{n}=x^{-1}_{n}$ if $x_{n}\not = 0$ and $\bar{x}^{-1}_{n} = 0$ if $x_{n}=0$. Recall that
 $X\cdot X^{-1}=1$ if and only if $\{n\in\mathbb N: x_{n}\cdot\bar{x}^{-1}_{n}=1\}\in\mathcal{U}$. This relation holds since 
$\{n\in\mathbb N : x_{n}\not = 0\}\subseteq \{n\in\mathbb N: x_{n}\cdot\bar{x}^{-1}_{n}=1\}$.\\

To show that $^*\mathbb R$ is linearly ordered. Suppose that $\bra x_{n}\ket,\bra y_{n}\ket \in {^*\mathbb R}$ and denote 
$A=\{n\in\mathbb N: x_{n}<y_{n}\}$, $B=\{n\in\mathbb N : x_{n}=y_{n}\}$, and $C=\{n\in\mathbb N : x_{n}>y_{n}\}$. Since 
$A\cup B\cup C =\mathbb N\in\mathcal{U}$, then by Lemma \ref{Thm: union} exactly one of the sets are in the ultrafilter $\mathcal{U}.$ Thus exactly one of the following relations hold:
\[
\bra x_{n}\ket< \bra y_{n}\ket\text{,             }\bra x_{n}\ket=\bra y_{n}\ket\text{ or             }\bra x_{n}\ket>\bra y_{n}\ket.
\]
Therefore  $^*\mathbb R$ is a linearly ordered field under our newly defined operations.
\end{proof}

We conclude this section by showing that $\mathbb R$ can be imbedded isomorphically as a linearly ordered subfield of $^*\mathbb R$ by the following mapping.

\begin{definition}
We define $*:\mathbb R\rightarrow {^*\mathbb R}$ to be a mapping such that $*(x)={^*x}$, where ${^*x}=\bra x,x,x,\dots\ket\in{^*\mathbb R}$.
\end{definition}

\begin{theorem}
The mapping $*$ is an order preserving isomorphism of $\mathbb R$ into a subfield of ${^*\mathbb R}$.
\end{theorem}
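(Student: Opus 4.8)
The plan is to verify the three defining properties of an order-preserving field embedding for the map $*$, namely that it is injective, that it respects the field operations (addition and multiplication), and that it respects the order. Throughout I will repeatedly use the fact that $\mathbb{N}\in\mathcal{U}$ (since $\mathcal{U}$ is a filter and hence upward closed, containing $\mathbb{N}$ as the top element), together with Theorem~\ref{Thm: Char}, which guarantees that for every $A\subseteq\mathbb{N}$ exactly one of $A$ or $\mathbb{N}\setminus A$ lies in $\mathcal{U}$.

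First I would establish that $*$ is a ring homomorphism. Given $x,y\in\mathbb{R}$, the constant sequences give $^*x=\bra x,x,\dots\ket$ and $^*y=\bra y,y,\dots\ket$, so by the definition of the operations on $^*\mathbb{R}$ we have $^*x+{^*y}=\bra x+y,x+y,\dots\ket={^*(x+y)}$ and similarly $^*x\cdot{^*y}={^*(xy)}$; these hold because the componentwise sum and product of two constant sequences are again the corresponding constant sequences, so the agreement set is all of $\mathbb{N}\in\mathcal{U}$. The multiplicative identity $1\in\mathbb{R}$ maps to $\bra 1,1,\dots\ket$, which is the unit of $^*\mathbb{R}$, so $*$ is a unital ring homomorphism.

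Next I would prove injectivity together with order preservation, since both reduce to the same observation. Suppose $x<y$ in $\mathbb{R}$. Then the agreement set $\{n\in\mathbb{N}:x<y\}$ is either all of $\mathbb{N}$ (if the strict inequality holds) and hence lies in $\mathcal{U}$, so by the definition of the order on $^*\mathbb{R}$ we get $^*x<{^*y}$. Conversely, if $x\neq y$, say $x<y$, the same computation shows $^*x<{^*y}$, so in particular $^*x\neq{^*y}$; this gives injectivity. For full order preservation in both directions I would note that if $x=y$ then $^*x={^*y}$, and invoke the linearity of the order on both $\mathbb{R}$ and $^*\mathbb{R}$ (the latter established in the previous theorem) to conclude that $x<y$ if and only if $^*x<{^*y}$. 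Being an injective unital ring homomorphism between fields, $*$ is automatically a field isomorphism onto its image $*(\mathbb{R})\subseteq{^*\mathbb{R}}$, and since the image is closed under the field operations it is a subfield.

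The only step requiring genuine care is order preservation, and specifically the reliance on the ultrafilter property rather than merely the filter property. The forward direction ($x<y$ in $\mathbb{R}$ implies $^*x<{^*y}$) is immediate because the relevant index set is all of $\mathbb{N}$. The reverse direction is where the structure of $\mathcal{U}$ matters: to rule out the possibility that $^*x<{^*y}$ while $x\geq y$ in $\mathbb{R}$, I would observe that $x\geq y$ forces the index set $\{n:x<y\}$ to be empty, hence not in $\mathcal{U}$, so $^*x<{^*y}$ cannot hold. I expect this trichotomy bookkeeping, cleanly handled by Theorem~\ref{Thm: Char} and Lemma~\ref{Thm: union}, to be the main conceptual obstacle, though it is routine once one commits to tracking which index sets land in the ultrafilter.
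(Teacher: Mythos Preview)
The paper states this theorem without proof; it appears immediately before the start of the next section with no accompanying argument. Your proposal therefore supplies what the paper omits, and it does so correctly: you verify that $*$ respects addition, multiplication, and the unit by reducing each to the observation that the relevant index set is all of $\mathbb{N}\in\mathcal{U}$; you obtain injectivity and order preservation from the fact that for distinct $x,y\in\mathbb{R}$ exactly one of $\{n:x<y\}$, $\{n:x=y\}$, $\{n:x>y\}$ is $\mathbb{N}$ and the other two are empty; and you conclude that the image is a subfield. The argument is sound and entirely routine, which is presumably why the paper left it to the reader. One small stylistic point: in your order-preservation paragraph the phrase ``is either all of $\mathbb{N}$ (if the strict inequality holds)'' is redundant, since you have already assumed $x<y$; simply saying the index set equals $\mathbb{N}$ would be cleaner.
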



\section{Finite, Infinitesimal, and Infinitely Large Numbers}
We classify the elements of the hyperreals, how they behave under the operations defined on $^*\mathbb R$, 
and how they relate to $\mathbb R$. We conclude this section by defining the standard part mapping, 
which will serve an important role in our treatment of the nonstandard analysis.

\begin{definition}[Classification]
 Let ${{x}}\in{^*\mathbb R}$
	\begin{description}
		\item[(a)] ${{x}}$ is ${\bf{ infinitesimal}}$ if $\mid{{x}}\mid < \epsilon$ for all $\epsilon\in\mathbb R_{+}$.  		We denote the set of all infinitesimals by $\mathcal{I}({^*\mathbb R})$.
		\item[(b)] ${{x}}$ is ${\bf{finite}}$ if ${\mid{{x}}\mid \le\epsilon}$ for some $\epsilon\in\mathbb R_{+}$.			            We denote the set of all finite numbers by $\mathcal{F}({^*\mathbb R})$.
		\item[(c)] ${{x}}$ is {\bf{infinitely large}} if $\mid{{x}}\mid > \epsilon$ for all $\epsilon\in\mathbb R_{+}$.  		We denote the set of all infinitely large numbers by $\mathcal{L}({^*\mathbb R})$.
	\end{description}
\end{definition}

\begin{example}[Infinitesimal]
 Let $\epsilon\in\mathbb R_{+}$ be arbitrary. Then $\bra{\frac{1}{n}}\ket$ is a positive infinitesimal or in other words
$0< \bra{\frac{1}{n}}\ket <\epsilon$. Clearly $\bra{\frac{1}{n}}\ket > 0$ since $\{n\in\mathbb N:\frac{1}{n}>0\}=\mathbb N\in\mathcal{U}$.
 Finally, $\bra{\frac{1}{n}}\ket < \epsilon$, where $\epsilon = \bra \epsilon,\epsilon,\epsilon\dots\ket$, because $\frac{1}
{n}<\epsilon$ implies that $\ n >\frac{1}{\epsilon}$. Let $\nu=min\{n\in\mathbb N:n>\frac{1}{\epsilon}\}$.
 Then $\{n:\frac{1}{n}<\epsilon\}=\{\nu,\nu+1,\nu+2,\dots\}\in\mathcal{U}$. Therefore $\bra{\frac{1}{n}}\ket$ is an infinitesimal.
\end{example}

\begin{example}[Finite]
It is clear that all real numbers are finite in $^*\mathbb R$. Here is an example for a finite, but standard number: $\bra r+\frac{1}{n}\ket
= r+{\bra\frac{1}{n}\ket}\in{\mathcal{F}({^*\mathbb R})\setminus\mathbb R}$.
\end{example}

\begin{example}[Infinitely Large]
$\bra n \ket$ is a positive, infinitely large number. Let $\epsilon\in\mathbb R_{+}$ be arbitrary and let $\nu=min\{n\in\mathbb N: \epsilon < n\}$. Then $\{n\in\mathbb N:\epsilon < n\} = \{\nu,\nu+1,\nu+2,\dots\}\in\mathcal{U}$.
 Thus $\bra n \ket > \epsilon$ and therefore $\bra n \ket$  is infinitely large. 
\end{example}

\begin{remark}
Observe that if $(a_{n})$ is any real-valued sequence converging to zero, then $\bra a_{n}\ket$ is an infinitesimal in ${^*\mathbb R}$.
Alternatively, if $(a_{n})$ is any real-valued sequence diverging to infinity, then $\bra a_{n}\ket$ is infinitely large in  ${^*\mathbb R}$.
\end{remark}
 The existence of these elements show that $^*\mathbb R$ is a proper extension of $\mathbb R$. We conclude this section by demonstrating that
 $\mathcal{F}({^*\mathbb R}) /  \mathcal{I}({^*\mathbb R})$ is isomorphic to a subfield of  $\mathbb R$. Indeed, by using the fact that $\mathbb R$ is a complete field,
 we shall show that $\mathcal{F}({^*\mathbb R}) /  \mathcal{I}({^*\mathbb R})$ is in fact isomorphic to $\mathbb R$.

\begin{theorem}
The set of finite numbers,  $\mathcal{F}({^*\mathbb R})$, forms a  subring of $^*\mathbb R$.
\end{theorem}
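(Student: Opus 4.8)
The plan is to invoke the standard subring criterion: a nonempty subset of a ring that is closed under subtraction and under multiplication is a subring. I am entitled to use that $^*\mathbb R$ is a linearly ordered field (established in the preceding theorem), so the absolute value $|x| = \max\{x,-x\}$ is well defined on $^*\mathbb R$ and satisfies both the triangle inequality $|x+y|\le|x|+|y|$ and multiplicativity $|xy|=|x|\,|y|$; these follow purely from the ordered-field axioms and are exactly the tools the classical argument uses. I would state these two properties at the outset and treat them as the workhorses of the proof.

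First I would settle nonemptiness: the zero element $\bra 0,0,\dots\ket$ satisfies $|0|=0\le {^*\epsilon}$ for every $\epsilon\in\mathbb R_+$, so $0\in\mathcal{F}({^*\mathbb R})$, and the same observation applied to the unit shows $1\in\mathcal{F}({^*\mathbb R})$, so the subring is in fact unital. For closure, take finite $x$ and $y$; by definition there exist reals $\epsilon_1,\epsilon_2\in\mathbb R_+$ with $|x|\le {^*\epsilon_1}$ and $|y|\le {^*\epsilon_2}$. The triangle inequality then gives $|x-y|\le|x|+|y|\le {^*\epsilon_1}+{^*\epsilon_2}={^*(\epsilon_1+\epsilon_2)}$, and since $\epsilon_1+\epsilon_2\in\mathbb R_+$ this exhibits $x-y$ as finite. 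For multiplication I would use multiplicativity of the absolute value to get $|xy|=|x|\,|y|\le {^*\epsilon_1}\cdot{^*\epsilon_2}={^*(\epsilon_1\epsilon_2)}$ with $\epsilon_1\epsilon_2\in\mathbb R_+$, so $xy$ is finite as well. These three verifications complete the criterion.

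The computations are routine, so the only point demanding genuine care is the bookkeeping across the embedding $*:\mathbb R\to {^*\mathbb R}$. The order comparison in the definition of \emph{finite} really takes place inside $^*\mathbb R$, between the hyperreal $|x|$ and the image ${^*\epsilon}$ of a real bound; I would therefore make explicit that, because $*$ is an order-preserving field homomorphism, one has ${^*\epsilon_1}+{^*\epsilon_2}={^*(\epsilon_1+\epsilon_2)}$ and ${^*\epsilon_1}\cdot{^*\epsilon_2}={^*(\epsilon_1\epsilon_2)}$, and that the real sums and products $\epsilon_1+\epsilon_2$ and $\epsilon_1\epsilon_2$ remain genuine positive reals so that they are legitimate witnesses of finiteness. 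This is the sole place where the argument is more than a transcription of the classical fact that bounded quantities are closed under $+$, $-$, and $\times$; everything else transfers verbatim.
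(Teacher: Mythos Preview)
Your proposal is correct and follows essentially the same route as the paper: bound $|x\pm y|$ via the triangle inequality and $|xy|$ via multiplicativity of the absolute value, using real bounds $\epsilon_1,\epsilon_2$ whose sum and product are again positive reals. You are in fact a bit more careful than the paper---which only checks closure under addition and multiplication---by explicitly verifying nonemptiness and closure under subtraction (the standard subring criterion) and by tracking the embedding ${^*\epsilon_1}+{^*\epsilon_2}={^*(\epsilon_1+\epsilon_2)}$; these are welcome refinements but do not change the underlying argument.
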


\begin{proof}
Since $\mathcal{F}({^*\mathbb R})$ inherits the addition and multiplication from the field $^*\mathbb R$, we must only show that it  is closed under these operations.

Let $ {a,b}\in\mathcal{F}({^*\mathbb R})$. Then there exists ${\epsilon_{1},\epsilon_{2}}\in\mathbb R_{+}$ such that 
$ {\mid a \mid < \epsilon_{1}}$ and ${ \mid b \mid < \epsilon_{2} }$. Thus ${\mid a + b \mid} \le {\mid a \mid} + {\mid b \mid} < \epsilon_{1} + \epsilon_{2}$ and therefore ${a+b}\in\mathcal{F}(^*\mathbb R)$. Similarly, ${\mid {a\cdot b} \mid} < {\epsilon_{1} \cdot \epsilon_{2}}$, and therefore ${a \cdot b}\in\mathcal{F}(^*\mathbb R)$.
\end{proof}

\begin{theorem}
$\mathcal{I}({^*\mathbb R})$ is a maximal ideal of $\mathcal{F}({^*\mathbb R})$. Consequently, ${\mathcal{F}({^*\mathbb R})} / {\mathcal{I}({^*\mathbb R})}$ is a subfield of $\mathbb R$.
\end{theorem}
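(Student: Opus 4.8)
The plan is to verify in turn that $\mathcal{I}({^*\mathbb R})$ is an ideal, that it is maximal, and finally to realize the quotient inside $\mathbb R$ by means of a standard part map. First I would check the ideal axioms. Closure under addition is the familiar $\epsilon/2$ estimate: if $|x|<\epsilon/2$ and $|y|<\epsilon/2$ for every $\epsilon\in\mathbb R_+$, then $|x+y|<\epsilon$; closure under negation is immediate, so $\mathcal{I}({^*\mathbb R})$ is an additive subgroup. For absorption, let $x\in\mathcal{I}({^*\mathbb R})$ and $a\in\mathcal{F}({^*\mathbb R})$; since $a$ is finite there is $M\in\mathbb R_+$ with $|a|<M$, and then for arbitrary $\epsilon\in\mathbb R_+$ the infinitesimality of $x$ gives $|x|<\epsilon/M$, whence $|ax|=|a||x|<\epsilon$. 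Thus $ax\in\mathcal{I}({^*\mathbb R})$, and $\mathcal{I}({^*\mathbb R})$ is an ideal of $\mathcal{F}({^*\mathbb R})$.

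For maximality, the key observation is that a finite number which fails to be infinitesimal has a finite inverse. Suppose $a\in\mathcal{F}({^*\mathbb R})\setminus\mathcal{I}({^*\mathbb R})$. Negating the definition of infinitesimal produces some $\epsilon_0\in\mathbb R_+$ with $|a|\ge\epsilon_0$; in particular $a\ne 0$, so $a^{-1}$ exists in the field $^*\mathbb R$, and $|a^{-1}|\le 1/\epsilon_0$ shows $a^{-1}\in\mathcal{F}({^*\mathbb R})$. Now if $\mathcal{J}$ is any ideal with $\mathcal{I}({^*\mathbb R})\subsetneq\mathcal{J}\subseteq\mathcal{F}({^*\mathbb R})$, I pick $a\in\mathcal{J}\setminus\mathcal{I}({^*\mathbb R})$; then $1=a\cdot a^{-1}\in\mathcal{J}$, forcing $\mathcal{J}=\mathcal{F}({^*\mathbb R})$. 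Hence $\mathcal{I}({^*\mathbb R})$ is maximal and the quotient $\mathcal{F}({^*\mathbb R})/\mathcal{I}({^*\mathbb R})$ is a field.

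To exhibit this field as a subfield of $\mathbb R$, I would introduce the standard part map. Given $x\in\mathcal{F}({^*\mathbb R})$, the set $\{r\in\mathbb R:{^*r}\le x\}$ is nonempty and bounded above (using $|x|<M$ for some real $M$), so by the completeness of $\mathbb R$ it has a least upper bound, which I define to be $\st(x)$. The crucial claim is that $x-{^*\st(x)}$ is infinitesimal: were $|x-{^*\st(x)}|\ge\delta$ for some $\delta\in\mathbb R_+$, then shifting $\st(x)$ up or down by $\delta$ would either place a larger element in the set or produce a smaller upper bound, contradicting the defining supremum. Granting this, $\st$ is a ring homomorphism, since the differences $x-{^*\st(x)}$ and $y-{^*\st(y)}$ lie in the ideal $\mathcal{I}({^*\mathbb R})$ and the additive and multiplicative identities follow by expanding $x+y$ and $xy$; moreover its kernel is exactly $\mathcal{I}({^*\mathbb R})$, because $\st(x)=0$ precisely when $x$ is infinitesimal. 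The first isomorphism theorem then identifies $\mathcal{F}({^*\mathbb R})/\mathcal{I}({^*\mathbb R})$ with the image of $\st$, a subfield of $\mathbb R$ (indeed all of $\mathbb R$, since $\st({^*r})=r$).

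The main obstacle is this third step, and within it the verification that $x-{^*\st(x)}$ is genuinely infinitesimal and that the induced map respects multiplication. This is where the completeness (least-upper-bound property) of $\mathbb R$ is indispensable: without it the supremum defining $\st(x)$ need not exist, and the correspondence with $\mathbb R$ would collapse. The ideal and maximality verifications, by contrast, are routine $\epsilon$-manipulations combined with the field property of $^*\mathbb R$ established earlier.
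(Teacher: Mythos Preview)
Your argument is correct. The ideal and maximality verifications match the paper's proof essentially line for line: both bound $|ax|$ by choosing a real bound $M$ on $|a|$ and then using $|x|<\epsilon/M$, and both handle maximality by observing that any $a\in\mathcal{F}({^*\mathbb R})\setminus\mathcal{I}({^*\mathbb R})$ satisfies $|a|\ge\epsilon_0$ for some real $\epsilon_0$, so $a^{-1}$ is finite and hence $1\in\mathcal{J}$.

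Where you diverge is the final step. The paper does not construct the standard part map here; instead it asserts that $\mathcal{F}({^*\mathbb R})$ is Archimedean, hence so is the quotient, and then invokes the classical theorem that every Archimedean ordered field embeds in $\mathbb R$. Your route is more concrete: you build $\st(x)=\sup\{r\in\mathbb R:{^*r}\le x\}$ directly, check $x-{^*\st(x)}\in\mathcal{I}({^*\mathbb R})$, and apply the first isomorphism theorem. This is exactly what the paper does in the \emph{next} theorem (the Characterization of Finite Numbers) and the subsequent definition of the standard part mapping, so you have in effect merged two of the paper's results into one argument. Your approach has the advantage of being self-contained (no appeal to the embedding theorem for Archimedean fields) and of immediately giving surjectivity onto $\mathbb R$; the paper's approach separates concerns but leans on an external structure theorem.
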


\begin{proof}
First, we must show that $\mathcal{I}({^*\mathbb R})$ is an ideal of $\mathcal{F}({^*\mathbb R})$. Let $a\in\mathcal{F}({^*\mathbb R})$ and 
$b\in\mathcal{I}({^*\mathbb R})$. Then there exists an $\epsilon_{1}\in\mathbb R_{+}$ such that ${\mid a \mid} < {\epsilon_{1}}$.
 Furthermore, for arbitrary $\epsilon_{2}\in\mathbb R_{+}$ we have ${\mid b \mid} < {\frac{\epsilon_{2}}{\epsilon_{1}}}$. Then 
${\mid a \cdot b \mid} < \epsilon_{2}$, which implies ${a \cdot b}\in\mathcal{I}({^*\mathbb R})$ and that $\mathcal{I}({^*\mathbb R})$ is an ideal of $\mathcal{F}({^*\mathbb R})$.\\

To show that $\mathcal{I}({^*\mathbb R})$ is maximal, suppose (to the contrary) that there exists an ideal $\mathcal{J}$ of  $\mathcal{F}({^*\mathbb R})$ such that
\[
 \mathcal{I}({^*\mathbb R})\subsetneq\mathcal{J}\subsetneq\mathcal{F}({^*\mathbb R})
\]
Let $\alpha\in{\mathcal{J}\setminus\mathcal{I}({^*\mathbb R})}$. Since $\alpha\not=0$, then its inverse, $\alpha^{-1}$, exists in the field $^*\mathbb R$. 
It remains to show that $\alpha^{-1}\in\mathcal{F}({^*\mathbb R})$. Indeed, there exists $\epsilon\in\mathbb 
R_{+}$ such that $\epsilon \le {\mid\alpha\mid}$, since $\alpha\not\in\mathcal{I}({^*\mathbb R})$. Thus ${\mid\frac{1}{\alpha}\mid} \le \frac{1}{\epsilon}$, 
which implies that $\alpha^{-1} \in\mathcal{F}({^*\mathbb R})$. So $1=\alpha\cdot\alpha^{-1}\in\mathcal{J}$, 
implying that $\mathcal{J} = \mathcal{F}({^*\mathbb R})$, a contradiction.\\

We conclude the proof by showing that ${\mathcal{F}({^*\mathbb R})} / {\mathcal{I}({^*\mathbb R})}$ is a subfield of $\mathbb R$. Clearly this
factor ring is a field since $\mathcal{I}({^*\mathbb R})$ is a maximal ideal. Additionally, since $\mathcal{F}({^*\mathbb R})$ is an Archimedean ring,
 our field must also be Archimedean. It is well known in mathematics that every ordered Archimedean field is isomorphic to a 
subfield of the real numbers. Therefore, ${\mathcal{F}({^*\mathbb R})} / {\mathcal{I}({^*\mathbb R})}$ is isomorphic to a subfield of $\mathbb R$.
\end{proof}

To show that ${\mathcal{F}({^*\mathbb R})} / {\mathcal{I}({^*\mathbb R})}$ is isomorphic to $\mathbb R$, we first must characterize the elements in $\mathcal{F}({^*\mathbb R})$.

\begin{theorem}[Characterization of Finite Numbers]\label{thm: finite}
Every $x\in\mathcal{F}({^*\mathbb R})$ can be written uniquely as the sum
\[
x=r+h,
\]
where $r\in\mathbb R$ and $h\in\mathcal{I}(^*\mathbb R)$.
\end{theorem}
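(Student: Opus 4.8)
The plan is to prove existence and uniqueness separately. For existence I will manufacture the real part $r$ as the supremum of a Dedekind-type cut in $\mathbb{R}$, leaning on the completeness (least-upper-bound property) of $\mathbb{R}$ recalled in the introduction, and then set $h := x - {}^{*}r$ and show it is infinitesimal. For uniqueness I will invoke the fact that the only real number whose image under $*$ is infinitesimal is $0$. Throughout I identify $r\in\mathbb{R}$ with its image ${}^{*}r\in{}^{*}\mathbb{R}$, as justified by the order-preserving embedding theorem stated just above.

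For existence, I would begin with finiteness: since $x\in\mathcal{F}({}^{*}\mathbb{R})$ there is $\varepsilon_0\in\mathbb{R}_+$ with $|x|\le{}^{*}\varepsilon_0$, so $-{}^{*}\varepsilon_0\le x\le{}^{*}\varepsilon_0$. I then define the cut
\[
A = \{\, q\in\mathbb{R} : {}^{*}q < x \,\}.
\]
Using that $*$ is order preserving, $A$ is nonempty (it contains $-\varepsilon_0-1$) and bounded above by $\varepsilon_0$ (if $q>\varepsilon_0$ then ${}^{*}q>{}^{*}\varepsilon_0\ge x$, so $q\notin A$), whence $r:=\sup A$ exists in $\mathbb{R}$ by completeness. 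To see that $h:=x-{}^{*}r\in\mathcal{I}({}^{*}\mathbb{R})$, I argue by contradiction: if $h$ were not infinitesimal there would be $\varepsilon\in\mathbb{R}_+$ with $|x-{}^{*}r|\ge{}^{*}\varepsilon$, and linear ordering of ${}^{*}\mathbb{R}$ splits this into two cases. If $x\ge{}^{*}(r+\varepsilon)$, then $r+\tfrac{\varepsilon}{2}\in A$ since ${}^{*}(r+\tfrac{\varepsilon}{2})<{}^{*}(r+\varepsilon)\le x$, contradicting that $r$ is an upper bound of $A$. If instead $x\le{}^{*}(r-\varepsilon)$, then for every $q\in A$ we get ${}^{*}q<x\le{}^{*}(r-\varepsilon)$, so $q<r-\varepsilon$ by order preservation, making $r-\varepsilon$ an upper bound of $A$ smaller than $\sup A$, again a contradiction. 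Hence $|x-{}^{*}r|<{}^{*}\varepsilon$ for every real $\varepsilon>0$, so $x=r+h$ with $h$ infinitesimal.

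For uniqueness, suppose $x=r_1+h_1=r_2+h_2$ with $r_1,r_2\in\mathbb{R}$ and $h_1,h_2\in\mathcal{I}({}^{*}\mathbb{R})$. Then ${}^{*}(r_1-r_2)=h_2-h_1$, which is infinitesimal because $\mathcal{I}({}^{*}\mathbb{R})$ is closed under subtraction (being an ideal of $\mathcal{F}({}^{*}\mathbb{R})$ by the preceding theorem). But if $r_1\ne r_2$, taking $\varepsilon=\tfrac{1}{2}|r_1-r_2|$ would force $|r_1-r_2|<\tfrac{1}{2}|r_1-r_2|$, which is absurd; hence $r_1=r_2$ and consequently $h_1=h_2$.

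The step I expect to be the main obstacle is the infinitesimality of $h$, since it requires carefully translating the \emph{failure} of $h$ being infinitesimal into a violation of either the upper-bound or the least-upper-bound property of $r$, and every comparison between an embedded real ${}^{*}q$ and the genuine hyperreal $x$ must be routed through the order preservation of $*$. The remaining ingredients, namely the existence of $r=\sup A$ (pure completeness of $\mathbb{R}$) and uniqueness (closure of $\mathcal{I}({}^{*}\mathbb{R})$ under subtraction together with the absence of nonzero real infinitesimals), should be routine given the results already established.
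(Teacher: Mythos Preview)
Your proposal is correct and follows essentially the same route as the paper: define $r=\sup\{a\in\mathbb{R}:a<x\}$ via completeness of $\mathbb{R}$, argue by contradiction that $x-r$ is infinitesimal by splitting into the two sign cases and showing each violates the defining property of the supremum, and handle uniqueness by noting that $r_1-r_2=h_2-h_1$ forces a real number to be infinitesimal, hence zero. Your write-up is in fact a bit more careful than the paper's (you explicitly verify nonemptiness and boundedness of the cut and consistently track the embedding $*$), but the underlying argument is identical.
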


\begin{proof} Let $r=\sup\{a\in{\mathbb R}:a<x\}$. The existence of $r$ is guaranteed by the completeness of $\mathbb R$ since the set $\{a\in{\mathbb R}:a<x\}$ is bounded from above. It remains to show that $x-r$ is an infinitesimal. Suppose (to the contrary) that $x-r$ is not an infinitesimal. Then there exists an $\epsilon\in\mathbb R_{+}$ such that $\epsilon\le{\mid x-r\mid}$. If $x-r>0$, then  this implies that $\epsilon+r<x$, contradicting our choice of r. Alternatively, if $x-r<0$ then $x<r-\epsilon$, which also contradicts our choice of r as the supremum. Therefore $x-r$ is an infinitesimal.\\

We now show that this sum is unique. Let $x\in\mathcal{F}(^*\mathbb R)$ such that 
\[
x=r_{1}+h_{1} \text{ and } x=r_{2} + h_{2},
\]
 where $r_{1},r_{2}\in\mathbb R$ and $h_{1},h_{2}\in\mathcal{I}(^*\mathbb R)$. Rearranging the terms, we have
\[
r_{1}-r_{2} = h_{2} - h_{1}.
\]
Observe that the left hand side is a real number while the right hand side is an infinitesimal. The only element that is simultaneously real and infinitesimal is 0. Thus $r_{1}=r_{2}$ and $h_{1}=h_{2}$. Therefore $x=r+h$ is unique.
\end{proof}

As a consequence of Theorem \ref{thm: finite}, since every element in $\mathcal{F}(^*\mathbb R)$ is of the form $r+h$, then the elements in  ${\mathcal{F}({^*\mathbb R})} / {\mathcal{I}({^*\mathbb R})}$ are nothing more than equivalence classes of real numbers. Therefore,  ${\mathcal{F}({^*\mathbb R})} / {\mathcal{I}({^*\mathbb R})}$ is isomorphic to $\mathbb R$.\\

The above theorem justifies the following definition.

\begin{definition}[Standard Part Mapping]
The mapping $st:\mathcal{F}(^*\mathbb R)\rightarrow\mathbb R$, defined by $st(x) = r$, where $x\in\mathcal{F}(^*\mathbb R)$ and $x=r+h$, is called the {\bf{standard part mapping}}. This mapping is also known as the canonical homomorphism between $\mathcal{F}(^*\mathbb R)$ and $\mathbb R$.
\end{definition}

\begin{theorem}
The standard part mapping is an order preserving homomorphism in the sense that, for finite x and y, we have $x\le y$ implies $st(x)\le st(y)$.
\end{theorem}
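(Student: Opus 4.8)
The plan is to reduce everything to the additive decomposition supplied by Theorem~\ref{thm: finite}. Since $x$ and $y$ are finite, I would first write
\[
x = r + h, \qquad y = s + k,
\]
with $r,s\in\mathbb R$ and $h,k\in\mathcal{I}(^*\mathbb R)$, so that by the definition of the standard part mapping $\st(x)=r$ and $\st(y)=s$. The entire content of the theorem then becomes the implication: the hypothesis $x\le y$ forces $r\le s$.

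I would argue by contradiction, assuming $\st(x) > \st(y)$, that is $r > s$. Then $\epsilon := r - s$ is a strictly positive real number, and the difference $x-y$ decomposes as
\[
x - y = (r-s) + (h-k) = \epsilon + (h-k).
\]
The key observation is that $h-k$ is again an infinitesimal: this holds because $\mathcal{I}(^*\mathbb R)$ is an ideal of $\mathcal{F}(^*\mathbb R)$ (established earlier), and in particular is closed under subtraction. Applying the definition of an infinitesimal to the specific positive real $\epsilon$ gives $|h-k| < \epsilon$, hence $h-k > -\epsilon$.

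Combining these estimates yields
\[
x - y = \epsilon + (h-k) > \epsilon - \epsilon = 0,
\]
so $x > y$. Since $^*\mathbb R$ is a linearly ordered field, $x > y$ cannot coexist with the assumption $x\le y$, which is the desired contradiction. Therefore $r\le s$, i.e.\ $\st(x)\le\st(y)$, as required.

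I do not expect a genuine obstacle here, since the argument is essentially bookkeeping inside the linearly ordered field $^*\mathbb R$. The two points that must be handled with a little care are the justification that $h-k$ is infinitesimal (invoking the ideal structure of $\mathcal{I}(^*\mathbb R)$ rather than merely the definition term by term) and the correct invocation of the definition of infinitesimal for the particular positive real $\epsilon = r-s$, where the comparison is understood via the order-preserving embedding of $\mathbb R$ into $^*\mathbb R$.
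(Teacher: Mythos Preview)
Your proof is correct and follows essentially the same approach as the paper: both decompose $x$ and $y$ via Theorem~\ref{thm: finite}, assume the standard parts satisfy the reversed strict inequality, and derive a contradiction from the fact that the real difference $r-s$ is strictly positive while the infinitesimal difference $h-k$ is too small to compensate. The only cosmetic difference is that the paper phrases the contradiction as ``a positive real bounded above by an infinitesimal must be zero,'' whereas you phrase it as ``$x-y>0$ contradicts $x\le y$''; these are two sides of the same inequality manipulation.
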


\begin{proof}
It is easy to show that the standard part mapping is a homomorphism. To verify that the mapping is order preserving, suppose (to the contrary) that $r_{1} > r_{2}$. Then $r_{1} + h_{1} \le r_{2} + h_{2}$ implies $0<r_{1} - r_{2} \le h_{2} - h_{1}$. Thus $r_{1} - r_{2}$ must be a real infinitesimal, or in other words, $r_{1} - r_{2} = 0$, a contradiction.
\end{proof}

\begin{remark}
The standard part mapping does not preserve strict inequalities. Indeed, $r<r+h$, for any real $r$ and any positive infinitesimal $h$. It is clear that $r<r+h$, but $st(r)=st(r+h)=r$.
\end{remark}
Ultimately, we shall use the standard part mapping as a means of characterizing limit convergence of sequences and functions in the nonstandard analysis.

\section{Extending Sets and Functions in $^*\mathbb R$}
Before we can begin our treatment of the nonstandard analysis, we must first define what it means for an object to be either a subset or a function of $^*\mathbb R$.

\begin{definition}\label{Def: extension set}
Let $A\subseteq\mathbb R$. Then the set $^*A=\{ {\bra a_{n} \ket} \in {^*\mathbb R} : a_{n} \in A \text{ a.e\}}$
is called the nonstandard extension of $A$.
\end{definition}

\begin{remark}
When we say that $a_{n}\in A$ almost everywhere (a.e), we mean that $\{n\in\mathbb N : a_{n} \in A\}\in\mathcal{U}$.
\end{remark}

The following theorems will establish the key properties of our new sets.

\begin{theorem}\label{Thm: set equality}
Let $A\subseteq\mathbb R$. Then $A\subseteq {^*A}$, with equality holding if and only if $A$ is finite.
\end{theorem}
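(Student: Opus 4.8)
The plan is to prove the two directions separately, keeping in mind the standing identification of each real number $a$ with its image ${^*a}=\bra a,a,a,\dots\ket$ under the embedding $*$; the containment $A\subseteq{^*A}$ is to be read as $\{{^*a}:a\in A\}\subseteq{^*A}$. The inclusion itself is immediate: for $a\in A$ the representing sequence of ${^*a}$ is constantly $a$, so $\{n\in\mathbb N:a\in A\}=\mathbb N\in\mathcal U$, and hence ${^*a}\in{^*A}$ directly from Definition \ref{Def: extension set}. This disposes of the easy half and reduces the theorem to characterizing when the reverse inclusion holds.

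For the implication that finiteness forces equality, I would suppose $A=\{c_1,\dots,c_k\}$ and show ${^*A}\subseteq A$. Given $\bra a_n\ket\in{^*A}$, the set $\{n:a_n\in A\}$ lies in $\mathcal U$, and because $A$ is finite this set decomposes as the finite union $\bigcup_{i=1}^k\{n:a_n=c_i\}\in\mathcal U$. The key step is to invoke the union lemma, Lemma \ref{Thm: union}: at least one block $\{n:a_n=c_i\}$ must belong to $\mathcal U$, which is precisely the statement that $\bra a_n\ket={^*c_i}$ in ${^*\mathbb R}$. Thus $\bra a_n\ket$ is the image of the real number $c_i\in A$, giving ${^*A}\subseteq A$ and hence equality.

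For the converse I would argue the contrapositive: if $A$ is infinite I must exhibit a genuinely new element of ${^*A}$. Choosing a sequence $(a_n)$ of pairwise distinct elements of $A$, the hyperreal $x=\bra a_n\ket$ lies in ${^*A}$ since $\{n:a_n\in A\}=\mathbb N\in\mathcal U$. It remains to show $x$ is not the image of any real, i.e. $x\neq{^*c}$ for every $c\in\mathbb R$. This is where the freeness of $\mathcal U$ is essential and where I expect the main obstacle to lie: if $x={^*c}$ then $\{n:a_n=c\}\in\mathcal U$, yet distinctness of the $a_n$ makes this set empty or a singleton, hence finite. Since $\mathcal U$ is free it contains the Fr\'echet filter $\mathcal F_r$ (by the characterization of free ultrafilters established earlier), so the cofinite complement of any finite set lies in $\mathcal U$; were a finite set also in $\mathcal U$, closure under intersection would force $\varnothing\in\mathcal U$, contradicting the defining property of a filter. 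Therefore no finite set belongs to $\mathcal U$, so $x\neq{^*c}$ for all $c$, whence $A\subsetneq{^*A}$ and the equivalence is complete.
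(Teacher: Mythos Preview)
Your proof is correct and follows essentially the same route as the paper: the inclusion via constant sequences, the finite case via the union Lemma~\ref{Thm: union}, and the infinite case by building $\bra a_n\ket$ from a sequence of pairwise distinct elements of $A$ and noting that $\{n:a_n=c\}$ is empty or a singleton. The only minor differences are that you give a fuller justification (via $\mathcal F_r\subset\mathcal U$) of why no finite set lies in a free ultrafilter, and you exclude $x={^*c}$ for all $c\in\mathbb R$ rather than just $c\in A$, but neither changes the argument.
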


\begin{proof}
\begin{description}

We begin by proving that $A\subseteq {^*A}$. Let $A\subseteq\mathbb R$ and let $a\in A$ where $a=\bra a, a, a, \dots \ket$. 
Then clearly, $\{n\in\mathbb N : a\in A\}=\mathbb N\in\mathcal{U}$, implying that $a\in{^*A}.$
 Therefore $A\subseteq {^*A}$.

 We conclude by showing that equality holds only if $A$ is finite.
\item ($\Rightarrow$) Suppose (to the contrary) that $A$ is not finite. Our goal is to construct an element in $^*A$ that is not in $A$. 
Consider the sequence $\bra a_{1}, a_{2}, a_{3}, \dots \ket$, where each term is a distinct element in $A$. Then
\[
\{n\in\mathbb N : a_{n}\in A\}\in\mathcal{U} \text{ (by construction)}.
\]
Thus $ {\bra a_{n} \ket} \in {^*A}$, but there does not exists an $a\in A$ such that $ {\bra a_{n}\ket} = a.$ Indeed, 
\[
\{ n\in\mathbb N : a = a_{n}\} = \varnothing\text{ or a singleton set,} 
\]
neither of which can be in $\mathcal{U}$. Therefore $A\not = {^*A}$, a contradiction.
($\Leftarrow$)
Let $A\subseteq\mathbb R$ such that $A=\{b_{1}, b_{2},\dots,b_{k}\}$ is finte, and let ${\bra a_{n} \ket}\in{^*A}$.
Thus $\{n\in\mathbb N : a_{n}\in A\}\in\mathcal{U}$, which can be rewritten as 
\[
\{n\in\mathbb N: a_{n}\in A\} = \{n\in\mathbb N: a_{n} = b_{1}\}  \cup  \{n\in\mathbb N : a_{n} = b_{2}\}  \cup  \dots  \cup 
\{n\in\mathbb N : a_{n} =b_{ k}\}.
\]
Since the left hand side is in $\mathcal{U}$, we know by Lemma \ref{Thm: union}
  that there must exist an $i\in\mathbb N$ such that $\{n\in\mathbb N : a_{n} = b_{i}\}\in\mathcal{U}$. Thus 
$ {\bra a_{n} \ket} = b_{i}\in A,$ and by part {\bf{(i)}} we conclude that $A= {^*A}$.
\end{description}
\end{proof}

\begin{theorem}\label{Thm: nonstandard}
Any infinite subset of $\mathbb R$ has nonstandard elements in its extension.
\end{theorem}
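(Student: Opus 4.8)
The plan is to read this statement off from Theorem \ref{Thm: set equality}. Since $A$ is infinite, that theorem's equality criterion fails, so the inclusion $A \subseteq {^*A}$ must be strict; the witness produced in its proof is in fact a genuinely nonstandard element, and I would simply verify this. In other words, I treat the result as a corollary of the equality clause, but with the added observation that the strictness witness is not standard.

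Concretely, first I would use the infiniteness of $A$ to extract a sequence $(a_n)$ of \emph{pairwise distinct} points of $A$ (an infinite set contains a countably infinite subset, which can be enumerated without repetition). Setting $\xi = \bra a_n \ket$, the membership $\{n \in \mathbb{N} : a_n \in A\} = \mathbb{N} \in \mathcal{U}$ gives $\xi \in {^*A}$ directly from Definition \ref{Def: extension set}.

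The substantive step is to show that $\xi$ is nonstandard, i.e. that $\xi \neq {^*r}$ for every $r \in \mathbb{R}$. By the definition of equality in ${^*\mathbb{R}}$, this amounts to checking that $\{n \in \mathbb{N} : a_n = r\} \notin \mathcal{U}$ for each real $r$. Because the $a_n$ are pairwise distinct, this set is either empty or a singleton. Here I would invoke the fact (established in Chapter 1) that a free ultrafilter contains the Fr\'echet filter $\mathcal{F}_r$ and hence no finite set: the complement of a finite set is cofinite, so it lies in $\mathcal{F}_r \subset \mathcal{U}$, and were the finite set itself in $\mathcal{U}$, their empty intersection would belong to $\mathcal{U}$, contradicting $\varnothing \notin \mathcal{U}$. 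Thus no finite set sits in $\mathcal{U}$, so $\xi \neq {^*r}$ for all $r$, and $\xi$ is the desired nonstandard element of ${^*A}$.

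The only real obstacle is conceptual rather than technical: one must notice that the element manufactured in Theorem \ref{Thm: set equality} is not merely distinct from the standard points of $A$, but distinct from \emph{every} standard real ${^*r}$ — and it is this stronger statement that yields a truly nonstandard element. It is exactly the freeness of $\mathcal{U}$ (equivalently $\mathcal{F}_r \subset \mathcal{U}$) that supplies this, by ruling out the empty and singleton sets as members of $\mathcal{U}$. Everything else is a direct unwinding of the definitions.
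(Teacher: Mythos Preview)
Your proof is correct and essentially identical to the paper's: both pick a sequence of pairwise distinct elements of $A$, note that the resulting $\bra a_n\ket$ lies in ${^*A}$, and then argue that $\{n:a_n=r\}$ is empty or a singleton and hence not in the free ultrafilter $\mathcal{U}$. The only cosmetic difference is that you verify $\bra a_n\ket\neq{^*r}$ for \emph{every} $r\in\mathbb R$, whereas the paper checks it only for $r\in A$ and concludes $\bra a_n\ket\in{^*A}\setminus A$; since ${^*A}\cap\mathbb R=A$, these come to the same thing.
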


\begin{proof}
 Let $A\subseteq\mathbb R$ such that A is infinite. We construct the sequence, $ {\bra a_{1}, a_{2}, a_{3},\dots\ket},$ where 
$a_{i},a_{j}\in A$ for all $i,j\in\mathbb N$ and $a_{i} = a_{j}$ only if $i=j$. 
Then $\{n\in\mathbb N : a_{n}\in A\} = \mathbb N\in\mathcal{U}$, implying that ${\bra a_{n} \ket}\in{^*A}$. But for each $a\in A,$
\[
\{n\in\mathbb N : a_{n} = a\}
\]
is either empty or a singleton, both of which cannot be in $\mathcal{U}$ by Definition 5.1. 
Thus ${\bra a_{n} \ket} \in {^*A}\setminus A$ and therefore ${^*A}$ contains nonstandard elements.
\end{proof}

\begin{theorem}\label{Thm: boolean}
The boolean properties of sets are preserved by their nonstandard extensions.
\begin{description}
\item[(i)] $A\subseteq B$ if and only if ${^*A}\subseteq{^*B}$.
\item[(ii)] $^*{(A\cap B)} = {^*A}\cap{^*B}$.
\item[(iii)] $^*{(A\cup B)} = {^*A}\cup {^*B}$.
\item[(iv)] $^*{(A\setminus B)} = {^*A}\setminus {^*B}$.
\end{description}
\end{theorem}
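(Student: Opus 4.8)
The plan is to translate every membership assertion about the extended sets into a statement about which index sets belong to the ultrafilter $\mathcal{U}$, and then to invoke the filter and ultrafilter axioms. For a representative $\bra a_n\ket$ and a set $S\subseteq\mathbb R$, write $I_S=\{n\in\mathbb N:a_n\in S\}$ for its \emph{index set}; by Definition~\ref{Def: extension set}, the assertion $\bra a_n\ket\in{^*S}$ means precisely $I_S\in\mathcal{U}$. Before starting I would note that this membership is independent of the chosen representative: if $\{n:a_n=b_n\}\in\mathcal{U}$ and $I_S\in\mathcal{U}$, then $\{n:a_n=b_n\}\cap I_S$ is a subset of $\{n:b_n\in S\}$ lying in $\mathcal{U}$, so the filter superset axiom gives $\{n:b_n\in S\}\in\mathcal{U}$. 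The key observation throughout is that the index set behaves well under Boolean operations: $I_{A\cap B}=I_A\cap I_B$, $I_{A\cup B}=I_A\cup I_B$, and $I_{A\setminus B}=I_A\cap(\mathbb N\setminus I_B)$.

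For part \textbf{(i)}, the forward direction is immediate: if $A\subseteq B$ then $I_A\subseteq I_B$, so $I_A\in\mathcal{U}$ forces $I_B\in\mathcal{U}$ by superset closure, giving ${^*A}\subseteq{^*B}$. For the converse I would pass back to $\mathbb R$ through the constant sequences: given $a\in A$, the constant $\bra a,a,\dots\ket$ lies in ${^*A}$ since its index set is $\mathbb N\in\mathcal{U}$, hence in ${^*B}$; but then $\{n:a\in B\}\in\mathcal{U}$, and this set is either $\mathbb N$ or $\varnothing$, so it cannot be $\varnothing$ (as $\varnothing\notin\mathcal{U}$), whence $a\in B$.

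Parts \textbf{(ii)} and \textbf{(iii)} are then essentially the two filter/ultrafilter axioms read through the identities above. For (ii), $I_{A\cap B}=I_A\cap I_B\in\mathcal{U}$ is equivalent to the pair $I_A\in\mathcal{U}$ and $I_B\in\mathcal{U}$ --- the forward implication by superset closure applied to each factor, the reverse by closure under finite intersections. For (iii) the inclusion ${^*A}\cup{^*B}\subseteq{^*(A\cup B)}$ follows from (i) applied to $A\subseteq A\cup B$ and $B\subseteq A\cup B$; the reverse inclusion is exactly Lemma~\ref{Thm: union}: if $I_A\cup I_B=I_{A\cup B}\in\mathcal{U}$ then at least one of $I_A,I_B$ lies in $\mathcal{U}$.

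The main obstacle is part \textbf{(iv)}, since it is the only place that genuinely requires maximality of $\mathcal{U}$ rather than just the filter axioms: I must convert the statement $\bra a_n\ket\notin{^*B}$, i.e.\ $I_B\notin\mathcal{U}$, into the positive statement $\mathbb N\setminus I_B\in\mathcal{U}$, and this is precisely Theorem~\ref{Thm: Char}. With that in hand, $\bra a_n\ket\in{^*A}\setminus{^*B}$ means $I_A\in\mathcal{U}$ and $I_B\notin\mathcal{U}$, so $I_A\cap(\mathbb N\setminus I_B)=I_{A\setminus B}\in\mathcal{U}$ by intersection closure, giving $\bra a_n\ket\in{^*(A\setminus B)}$; conversely, $I_{A\setminus B}\in\mathcal{U}$ forces both $I_A\in\mathcal{U}$ and $\mathbb N\setminus I_B\in\mathcal{U}$ by superset closure, and the latter rules out $I_B\in\mathcal{U}$ again by Theorem~\ref{Thm: Char}. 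I expect the only point to watch carefully is keeping the complement taken inside $\mathbb N$ (the index set) rather than inside $\mathbb R$, so that the ultrafilter dichotomy applies to $I_B$ and $\mathbb N\setminus I_B$ exactly as stated.
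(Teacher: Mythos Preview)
Your argument is correct and follows essentially the same route as the paper, which only writes out (i) and (ii) in detail and dismisses (iii)--(iv) as ``similar''; your use of Lemma~\ref{Thm: union} for the nontrivial inclusion in (iii) and of Theorem~\ref{Thm: Char} for (iv) is exactly what those omitted cases need. One small correction to your commentary: you say (iv) is ``the only place that genuinely requires maximality,'' but your own proof of the reverse inclusion in (iii) already invokes Lemma~\ref{Thm: union}, which is an ultrafilter property, so in fact both (iii) and (iv) need maximality while (i) and (ii) hold for any filter.
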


\begin{proof}
\hspace{.5in}
\begin{description}
\item[(i)]$(\Rightarrow)$ Let $A$ and $B$ be subsets of $\mathbb R$ such that $A\subseteq B$. If ${\bra a_{n} \ket} \in {^*A}$, then 
$\{n\in\mathbb N : a_{n} \in A\}\in\mathcal{U}$. Conside the set $\{n\in\mathbb N : a_{n} \in B\}$. Since $A\subseteq B$, then
\[
\{n\in\mathbb N : a_{n}\in A\} \subseteq \{n\in\mathbb N : a_{n} \in B\}.
\]
By definition 5.1 (ii), we know that $\{n\in\mathbb N : a_{n} \in B \}\in\mathcal{U}$. Thus $ {\bra a_{n} \ket} \in {^*B}$, 
implying that $ {^*A} \subseteq {^*B}$. 

$(\Leftarrow)$ Let ${^*A}$ and ${^*B}$ be subsets of ${^*\mathbb R}$ such that ${^*A}\subseteq{^*B}$. If $a\in A$, then 
${\bra a,a,a,\dots\ket}\in{^*A}$. By assumption we have ${\bra a,a,a,\dots\ket}\in{^*B}$, which implies that 
\[
\{n\in\mathbb N : a\in B\}\in\mathcal{U}.
\] 
Thus $a\in B$ and we conclude that $A\subseteq B$.

\item[(ii)] $(\subseteq)$ Let ${ \bra c_{n} \ket } \in {^*(A\cap B)}$. Then we define the set $C=\{n\in\mathbb N : c_{n}\in {(A\cap B)}\}\in\mathcal{U}$. On the other hand, let 
\[
A=\{n\in\mathbb N : c_{n} \in A\} \text{ and } B=\{n\in\mathbb N : c_{n}\in B\}.
\]
Observe that $C=(A\cap B)$. Thus
\[
\{n\in\mathbb N : c_{n} \in A\} \cap \{n\in\mathbb N : c_{n} \in B\} \in\mathcal{U}.
\]
By definition 5.1(iii), we know that both $A$ and $B$ must be contained in $\mathcal{U}$. Thus 
${\bra c_{n} \ket} \in {^*A}\cap {^*B}$.

$(\supseteq)$ Let ${\bra c_{n} \ket}\in {^*A}\cap{^*B}$. Then $A\in\mathcal{U}$ and $B\in\mathcal{U}$.
 By definition 5.1 (ii), we know that $A\cap B\in\mathcal{U}$, but $C=A\cap B$.
 Thus $\{n\in\mathbb N : c_{n}\in A\cap B\}\in\mathcal{U}$, which implies that ${ \bra c_{n} \ket } \in {^*(A\cap B)}$.

\item The proofs for parts {\bf{(iii)}} and {\bf{(iv)}} are similar in the sense that they depend upon the properties of the free ultrafilter $\mathcal{U}$.
\end{description}
\end{proof}

The following result shall be usefull for our future work on characterizing the nonstandard definition of sequence convergence.

\begin{theorem}
${^*\mathbb N_{\infty}} = {^*\mathbb N}\setminus\mathbb N$ only contains infinitely large numbers.
\end{theorem}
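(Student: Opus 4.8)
The plan is to take an arbitrary element $x=\bra a_n\ket\in{^*\mathbb N}\setminus\mathbb N$, show that it dominates every standard natural number, and then invoke the Archimedean property of $\mathbb R$ to upgrade this to the statement that $x$ exceeds every positive real, i.e. $x\in\mathcal{L}({^*\mathbb R})$. First I would normalize the representing sequence. Since $x\in{^*\mathbb N}$ means $\{n:a_n\in\mathbb N\}\in\mathcal{U}$, I may redefine $a_n$ to equal $1$ on the complementary set (which is not in $\mathcal{U}$) without changing the class $\bra a_n\ket$. Thus I assume $a_n\in\mathbb N$ for \emph{every} $n$, so that the event ``$a_n\le k$'' decomposes into finitely many equalities, and moreover $a_n\ge 1$ throughout.

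The core claim is that $x>{^*k}$ for every $k\in\mathbb N$, which I would prove by contradiction. Fix $k$ and suppose $x>{^*k}$ fails. By linearity of the order on ${^*\mathbb R}$ we then have $x\le{^*k}$, and unwinding the definition of the order together with Theorem \ref{Thm: Char} this forces $\{n:a_n\le k\}\in\mathcal{U}$. Because each $a_n$ is now a genuine natural number, this set decomposes as the finite union
\[
\{n:a_n\le k\}=\bigcup_{j=1}^{k}\{n:a_n=j\}.
\]
Applying Lemma \ref{Thm: union} to this finite union lying in $\mathcal{U}$ produces some $j\in\{1,\dots,k\}$ with $\{n:a_n=j\}\in\mathcal{U}$, which is exactly the statement $x={^*j}$. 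Hence $x\in\mathbb N$, contradicting $x\in{^*\mathbb N}\setminus\mathbb N$. Therefore $x>{^*k}$ for all $k$.

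To finish, I would pass from naturals to arbitrary positive reals. In particular $x>{^*1}>0$, so $x$ is positive and $|x|=x$. Given any $\epsilon\in\mathbb R_+$, the Archimedean property of $\mathbb R$ yields $k\in\mathbb N$ with $k>\epsilon$, whence $|x|=x>{^*k}>{^*\epsilon}$. Since $\epsilon$ was arbitrary, $|x|>\epsilon$ for all $\epsilon\in\mathbb R_+$, which by the classification definition means $x$ is infinitely large.

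I expect the only genuinely delicate point to be the finite-union reduction: one must justify that restricting to the set where $a_n\in\mathbb N$ is legitimate (so that ``$a_n\le k$'' really splits into finitely many equalities) and that the failure of $x>{^*k}$ translates, via linearity and Theorem \ref{Thm: Char}, into $\{n:a_n\le k\}\in\mathcal{U}$. Once that partition is correctly set up, the appeal to Lemma \ref{Thm: union} and the Archimedean step are routine.
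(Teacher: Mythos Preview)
Your argument is correct, but it proceeds along a different route than the paper's. The paper works at the level of the standard part decomposition: it takes $x\in\mathcal{F}({^*\mathbb N})$, writes $x=r+h$ via Theorem~\ref{thm: finite}, observes $r\le x<r+1$, and argues that a strict inequality $r<x$ would force a natural number strictly between $r$ and $r+1$; hence $h=0$ and $\mathcal{F}({^*\mathbb N})=\mathbb N$, leaving only infinitely large elements in ${^*\mathbb N}\setminus\mathbb N$.

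Your approach instead stays at the ultrafilter level. You show directly that $x\le{^*k}$ would force $\{n:a_n\le k\}\in\mathcal{U}$, split this into the finite union $\bigcup_{j=1}^k\{n:a_n=j\}$, and invoke Lemma~\ref{Thm: union} to conclude $x\in\mathbb N$. This is essentially the argument behind the $(\Leftarrow)$ direction of Theorem~\ref{Thm: set equality} applied to the finite set $\{1,\dots,k\}$; you could in fact shorten your proof by citing that theorem to get ${^*\{1,\dots,k\}}=\{1,\dots,k\}$ immediately. What your approach buys is independence from the standard part machinery (you never need Theorem~\ref{thm: finite}), at the cost of repeating a finite-union argument the paper has already packaged elsewhere. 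The paper's approach is shorter once the standard part is available, but leans on that earlier structural result.
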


\begin{proof}
 Our goal is to show that ${^*\mathbb N}$ does not contain infinitesimal numbers or nonstandard finite numbers, thus leaving us 
 with ${^*\mathbb N_{\infty}}= \mathcal{L}(^*\mathbb N)$.
Since $\mathbb N$ is an infinite subset of $\mathbb R$ we know, by Theorem \ref{Thm: nonstandard}, that 
${^*\mathbb N}$ contains nonstandard numbers, i.e. ${^*\mathbb N}\setminus{\mathbb N}\not = \varnothing$.
 It is easy to see that ${^*\mathbb N}$ cannot contain infinitesimal numbers and thus $\mathcal{I}(^*\mathbb N) = \varnothing$. 
In regards to $\mathcal{F}(^*\mathbb N)$, let $x\in\mathcal{F}(^*\mathbb N)$ such that $x=r+h$, where $r\in\mathbb N$ and $h\in\mathcal{I}(^*\mathbb R)$. Clearly $ r\le x < {r+1}$. 
If $x$ were strictly greater than $r$, there would exists a natural number between $r$ and $r+1$, a contradiction. Thus $x$ must equal $r$, implying that 
$h=0\in\mathcal{I}(^*\mathbb R)$. Thus $\mathcal{F}(^*\mathbb N) = \mathbb N$. Therefore, by process of elimination,  ${^*\mathbb N_{\infty}}= \mathcal{L}(^*\mathbb N)$.
\end{proof}

We conclude this section by defining what it means to extend a function  
$f:X\rightarrow\mathbb R$ to $^*f:{^*X}\rightarrow{^*\mathbb R}$.

\begin{definition}\label{Def: function}
Let $f:X\rightarrow\mathbb R$ be a real valued function where $X\subseteq\mathbb R$. Then the function, 
$^*f:{^*X}\rightarrow{^*\mathbb R}$, defined by ${^*f( {\bra x_{n} \ket})} = {\bra f(x_{n}) \ket}$ for all 
${\bra x_{n} \ket}\in{^*X}$ is called the nonstandard extension of $f$.
\end{definition}

The nonstandard extension of $f$ is a well defined function. In general
\[
\{n\in\mathbb N : x_{n} = {x^\prime}_{n}\}\subseteq \{n\in\mathbb N : f(x_{n}) = f({x^\prime}_{n})\}.
\]
Thus if $\{n\in\mathbb N : x_{n} = {x^\prime}_{n}\}\in\mathcal{U}$, then 
$\{n\in\mathbb N : f(x_{n}) = f({x^\prime}_{n})\}\in\mathcal{U}$. Therefore $^*f$ is a well defined function. 
Furthermore, ${^*f}$ agrees with $f$ on $\mathbb R$ in the sense that if $x\in\mathbb R$, then ${^*f(x) = f(x)}$.

\begin{example}
Recall from real analysis that the function, $f:\mathbb N\rightarrow\mathbb R$, defines a sequence in $\mathbb R$ such that 
$f(n) = a_{n}$. The nonstandard extension of the sequence is $^*f: {^*\mathbb N} \rightarrow {^*\mathbb R}$, where 
${^*f(n)} = {^*a_{n}}$. We do not call ${^*(a_{n})}$ a sequence since $\card({\mathbb N}) \not = \card({^*\mathbb N})$. Instead, we define ${^*(a_{n})}$ to be a {\bf{hypersequence}}. 
\end{example}

\section{Non-Standard Characterization of Limits in $\mathbb R$}
The next result belongs to A.Robinson~\cite{aRob66} and establishes the limit of a sequence  of real numbers in terms of non-standard numbers.

\begin{theorem}[Robinson]
The following are equivalent:
\begin{description}
\item[(i)] $\lim_{n\rightarrow\infty} a_{n} = L$ in the sense that
\[
(\forall\epsilon\in\mathbb R_{+})(\exists\nu\in\mathbb N)(\forall n\in\mathbb N)(n\ge\nu\implies |a_{n} - L|<\epsilon).
\]
\item[(ii)] $(\forall\omega\in{^*\mathbb{N}_{\infty}}) ({^*a_{n}}\approx L)$.
\end{description}
\end{theorem}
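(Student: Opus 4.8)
The plan is to prove both implications by converting each side into a statement about membership in the free ultrafilter $\mathcal U$. Recall that $x \approx L$ means $x - L \in \mathcal I({}^*\mathbb R)$, i.e.\ $|x - L| < \epsilon$ in ${}^*\mathbb R$ for every standard $\epsilon \in \mathbb R_{+}$. An infinite hypernatural $\omega \in {}^*\mathbb N_{\infty}$ is represented by a sequence $\omega = \bra \omega_n \ket$ with $\omega_n \in \mathbb N$ almost everywhere, and by Definition \ref{Def: function} the value of the hypersequence at $\omega$ is ${}^*a_\omega = \bra a_{\omega_n} \ket$. The single translation driving the whole argument is that $|{}^*a_\omega - L| < \epsilon$ holds in ${}^*\mathbb R$ precisely when $\{n \in \mathbb N : |a_{\omega_n} - L| < \epsilon\} \in \mathcal U$.

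For the forward direction (i) $\Rightarrow$ (ii), I would fix an arbitrary infinite $\omega$ and an arbitrary standard $\epsilon \in \mathbb R_{+}$. By (i) there is $\nu \in \mathbb N$ with $n \ge \nu \Rightarrow |a_n - L| < \epsilon$. Since $\omega$ is infinitely large we have $\omega > \nu$ in ${}^*\mathbb R$, which by the definition of the order means $\{n : \omega_n > \nu\} \in \mathcal U$. On this index set $|a_{\omega_n} - L| < \epsilon$, so $\{n : \omega_n > \nu\} \subseteq \{n : |a_{\omega_n} - L| < \epsilon\}$, and upward closure of the filter (Definition \ref{D: Filters}) forces $\{n : |a_{\omega_n} - L| < \epsilon\} \in \mathcal U$. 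Hence $|{}^*a_\omega - L| < \epsilon$ for every standard $\epsilon$, giving ${}^*a_\omega \approx L$.

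For the converse I would argue by contraposition, and this is where I expect the main obstacle to lie, since it requires manufacturing a single infinite hypernatural whose coordinates \emph{all} stay far from $L$. Assume (i) fails; by Corollary \ref{C: Negation} there is $\epsilon_0 \in \mathbb R_{+}$ with $S_{\epsilon_0} \notin \mathcal F_r$, so the complement $T = \{n : |a_n - L| \ge \epsilon_0\}$ is infinite. I would enumerate $T = \{t_1 < t_2 < t_3 < \cdots\}$ increasingly and set $\omega = \bra t_n \ket$. Two verifications then finish the contradiction: first, $\omega \in {}^*\mathbb N_{\infty}$, because $t_n \to \infty$ means that for each standard $r$ the set $\{n : t_n > r\}$ is cofinite, hence lies in $\mathcal U$ (every free ultrafilter contains the Fr\'echet filter), which gives $\omega > r$; and second, ${}^*a_\omega \not\approx L$, because $\{n : |a_{t_n} - L| \ge \epsilon_0\} = \mathbb N \in \mathcal U$ by construction, so $|{}^*a_\omega - L| \ge \epsilon_0$ and the difference cannot be infinitesimal. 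This contradicts (ii), completing the proof.

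The delicate point throughout is keeping the two distinct uses of $\mathcal U$ separate: in the forward direction membership is obtained from \emph{upward closure} applied to the tail $\{n : \omega_n > \nu\}$, whereas in the converse it is obtained from the fact that $\mathcal U$ contains all cofinite sets (to make $\omega$ infinite) together with the brute fact that the witnessing set is all of $\mathbb N$ (to keep ${}^*a_\omega$ bounded away from $L$).
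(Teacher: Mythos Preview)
Your proposal is correct and follows essentially the same approach as the paper: both prove (i)$\Rightarrow$(ii) by using $\omega>\nu$ to get a $\mathcal U$-large index set on which $|a_{\omega_n}-L|<\epsilon$, and both prove (ii)$\Rightarrow$(i) by contraposition, building an infinite hypernatural from the infinite set of ``bad'' indices. If anything, your execution is cleaner---you invoke upward closure explicitly in the forward direction and give a precise enumeration $T=\{t_1<t_2<\cdots\}$ in the converse, whereas the paper's phrasing in both spots is a bit loose.
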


\begin{proof}
\hspace{.5in}
\begin{description}

\item[(i)$\Rightarrow$ (ii)] Assume that $\lim_{n\rightarrow\infty} a_{n} = L$. Let $\epsilon\in\mathbb R_{+}$ be fixed so that there exists a $\nu\in\mathbb N$ such that
\[
(\forall n\in\mathbb N)(n\ge\nu\implies |a_{n} - L|<\epsilon).
\]
Let $\omega\in {^*\mathbb N_{\infty}}$, where $\omega = \bra \omega_{1}, \omega_{2},\dots\ket$ $(\omega_{i}\in\mathbb N\text{ } \forall i\in\mathbb N)$.
Consider the hypersequence
\[
{^*f}:{^*\mathbb N}\rightarrow {^*\mathbb R},
\]
where ${^*f}$ is the non-standard extension of $(a_{n})$. By definition,\\ 
${^*f}(\omega) = {^*f}(\bra \omega_{1}, \omega_{2},\dots\ket) = \bra f(\omega_{1}), f(\omega_{2}),\dots\ket = \bra a_{\omega_{1}}, a_{\omega_{2}},\dots\ket = {^*a_{\omega}} $.
Then $|{^*a_{\omega}} - L| = \bra |a_{\omega_{1}} - L|, |a_{\omega_{2}}-L|,\dots\ket$. Since $\omega$ is infinitely large, we know that there exists an $i\in\mathbb N$ such that 
\[
\nu\le {\omega_{i}}< {\omega_{i+1}}< {\omega_{i+2}}<\dots
\]
Then by assumption,
\[
\{i\in\mathbb N: |a_{\omega_{i}} - L|<\epsilon\}\in\mathcal{U}.
\]
Thus $ | {^*a}_{\omega} - L| <\epsilon$, or in other words, ${^*a}_{\omega}\approx L$.

\item[(ii)$\Rightarrow$(i)] Assume that $({^*a}_{\omega} \approx L) (\forall \omega \in {^*\mathbb N_{\infty}})$. Suppose (to the contrary) that $\lim_{n\rightarrow\infty} a_{n} \not= L$. This implies that 
\[
(\exists\epsilon\in\mathbb R_{+})(\forall\nu\in\mathbb N)(\exists n\in\mathbb N)(n\ge\nu\text{ and } |a_{n} - L| >\epsilon).
\]
Thus there exists an infinite subset of $\mathbb N$ containing $\{n, n+1, n+2,\dots\}$, such that the above is true. We construct an infinitely large natural number of the form $\omega= \bra n, n+1, n+2,\dots\ket\in{^*\mathbb N_{\infty}}$. Obviously,
${^*a}_{\omega}\not\approx L$, contradicting our assumption.

\end{description}
\end{proof}


\appendix
\chapter{The Free Ultrafilter as an Additive Measure}

For those more familiar with measure theory, we can characterize the free ultrafilter as a finitely additive measure.

\begin{definition}\label{D: Measure}
Let $\mu$ denote a fixed, finitely additive measure on the set $\mathbb N$ such that:
\begin{enumerate}
\item $( \forall A\in\mathcal{P}(\mathbb N) ) (\mu(A)=1 \text{ or } \mu(A)=0)$.
\item $\mu(\mathbb N)=1$ and $\mu(A)=0$ for all finite $A\in\mathcal{P}(\mathbb N)$.
\end{enumerate}
When we say that $\mu$ is a finitely additive measure we mean that for all mutually disjoint $A,B\in\mathcal{P}(\mathbb N)$
\[
\mu(A+B)=\mu(A)+\mu(B).
\]
\end{definition}

\begin{lemma}[Properties of $\mu$] Let $\mu$ be as defined in definition \ref{D: Measure}. Then
\begin{description}
\item[(i)] Let $A\in\mathcal{P}(\mathbb N).$ Then either $\mu(A)=1$ or $\mu(\mathbb N-A)=1$,but not both.
\item[(ii)] Let $A,B\in\mathcal{P}(\mathbb N)$ such that if $\mu(A)=1$ and $\mu(B)=1$ then $\mu(A\cap B)=1$.
\item[(iii)] Let $B\in\mathcal{P}(\mathbb N)$ and let $A\in\mathcal{P}(\mathbb N)$ such that $A\subseteq B \text{ and } \mu(A)=1$. Then $\mu(B)=1$\newpage
\end{description}
\end{lemma}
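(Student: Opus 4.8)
The plan is to derive all three properties directly from the finite additivity of $\mu$ together with the two-valued constraint in part (1) of Definition~\ref{D: Measure}, reading the symbol $A+B$ appearing there as the disjoint union $A\cup B$. I would prove (i) and (iii) first, since each follows in a single line from additivity, and then deduce (ii) from them. For (i), I would apply additivity to the partition $\mathbb N = A\cup(\mathbb N - A)$, whose two pieces are disjoint and exhaust $\mathbb N$. This gives $\mu(A)+\mu(\mathbb N - A)=\mu(\mathbb N)=1$, where $\mu(\mathbb N)=1$ is supplied by part (2). Since each summand lies in $\{0,1\}$, exactly one of them must equal $1$ and the other $0$, which is precisely the ``either/or, but not both'' assertion.

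For (iii), given $A\subseteq B$ I would write $B=A\cup(B\setminus A)$ as a disjoint union, so that additivity yields $\mu(B)=\mu(A)+\mu(B\setminus A)=1+\mu(B\setminus A)$. As $\mu(B)\in\{0,1\}$ while the right-hand side is at least $1$, the only consistent possibility is $\mu(B\setminus A)=0$ and $\mu(B)=1$, as required.

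For (ii), I would argue by contradiction. Assume $\mu(A)=1$ and $\mu(B)=1$ but $\mu(A\cap B)=0$. Decomposing $A=(A\cap B)\cup(A\setminus B)$ into disjoint pieces gives $1=\mu(A)=\mu(A\cap B)+\mu(A\setminus B)=\mu(A\setminus B)$, so $\mu(A\setminus B)=1$. Since $A\setminus B\subseteq\mathbb N - B$, the already-established monotonicity (iii) forces $\mu(\mathbb N - B)=1$. But $\mu(B)=1$ was assumed, and this contradicts the ``not both'' clause of (i). Hence $\mu(A\cap B)=1$.

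The only genuine subtlety, and the step I would flag as the main obstacle, is the directional issue in (ii): the natural first attempt is to bound $\mu(A\cap B)$ using the inclusion $A\cap B\subseteq A$, but monotonicity then runs in the \emph{wrong} direction and yields nothing useful. The fix is to pass instead to the complement $\mathbb N - B$ and invoke (i). Everything else is routine bookkeeping with disjoint unions; note that the part of clause (2) asserting $\mu(A)=0$ for finite $A$ plays no role in any of these three properties.
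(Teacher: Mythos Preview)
Your argument is correct. Parts (i) and (iii) are handled essentially as in the paper --- the same disjoint decompositions $\mathbb N = A \cup (\mathbb N - A)$ and $B = A \cup (B\setminus A)$ --- with the cosmetic difference that you argue directly while the paper phrases both by contradiction; in fact your version of (i) is slightly more complete, since the paper's proof only explicitly rules out the ``both'' case and leaves the ``at least one'' direction implicit.

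The genuine divergence is in (ii). The paper passes immediately to complements via De~Morgan: from $\mu(A^c)=\mu(B^c)=0$ (which uses (i)) and subadditivity it gets $\mu((A\cap B)^c)=\mu(A^c\cup B^c)\le \mu(A^c)+\mu(B^c)=0$, hence $\mu(A\cap B)=1$. You instead decompose $A$ to force $\mu(A\setminus B)=1$, then push this up to $\mu(\mathbb N - B)=1$ via the monotonicity you have already proved in (iii), and finish with the ``not both'' clause of (i). Both routes are short; the paper's is a one-liner but tacitly invokes subadditivity (which itself needs a small disjoint-union argument), while yours is more self-contained in that it recycles (i) and (iii) rather than appealing to an unproved inequality. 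Your closing remark that the finite-set clause of Definition~\ref{D: Measure} is unused here is also accurate.
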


\begin{proof}
\hspace{.5in}
\begin{description}
\item[(i)] Let $A\in\mathcal{P}(\mathbb N)$. Suppose (to the contrary) that $\mu(A)=1 \text{ and }\newline \mu(\mathbb N -A)=1$. 
Then $\mu(A)+\mu(\mathbb N-A) = \mu(A\cup(\mathbb N - A))=\mu(\mathbb N)=2$, contradicting the fact that $\mu(\mathbb N)=1$.

\item[(ii)] Let $A,B\in\mathcal{P}(\mathbb N)$ such that $\mu(A)=1 \text{ and } \mu(B)=1$. Taking the measure of the complement of $A\cap B$ we get
\[
\mu((A\cap B)^c)=\mu(A^c \cup B^c) \le \mu(A^c) + \mu(B^c),
\]
where $\mu(A^c) \text{ and } \mu(B^c)$ are both zero. Thus $\mu(A^c \cup B^c)=0$, which means that the complement, $\mu(A\cap B)=1$

\item[(iii)] Let $A,B\in\mathcal{P}(\mathbb N)$ such that $A\subseteq B \text{ and } \mu(A)=1$. Suppose (to the contrary) that $\mu(B)=0$. Then
\[
\mu(B)=\mu(A\cup (B-A))=\mu(A)+\mu(B-A)\ge 1,
\]
which is a contradiction, regardless of the measure of $(B-A)$.
\end{description}
\end{proof}
Observe that the properties of the measure $\mu$ are similar to those of the free ultrafilter $\mathcal{U}$ defined in Section 
\ref{S: Filters}. This is no coincidence, as the following theorem shall now demonstrate.

\begin{theorem}[Characterization of $\mu$]
Let $\mathcal{U}$ be a free ultrafilter on $\mathbb N$ and let $A\in\mathcal{P}(\mathbb N)$. Then
\begin{description}
\item[(i)] $A\in\mathcal{U}$ if and only if $\mu(A)=1$.
\item[(ii)]$A\not\in\mathcal{U}$ if and only if $\mu(A)=0$.
\end{description}
\end{theorem}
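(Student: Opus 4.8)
The plan is to prove both biconditionals by establishing that a free ultrafilter $\mathcal{U}$ and the measure $\mu$ assign the same ``large'' status to every subset of $\mathbb N$. First I would observe that statements (i) and (ii) are really two sides of the same coin: because $\mu$ is two-valued (so $\mu(A)=1$ iff $\mu(A)\ne 0$ iff $\mu(A)=0$ is false), and because $\mathcal{U}$ is maximal (so by Theorem~\ref{Thm: Char}, $A\notin\mathcal{U}$ iff $\mathbb N\setminus A\in\mathcal{U}$), part (ii) will follow from part (i) once I verify that the negations line up. Concretely, $A\notin\mathcal{U}$ is equivalent to $\mathbb N\setminus A\in\mathcal{U}$, which by (i) is equivalent to $\mu(\mathbb N\setminus A)=1$, which by property (i) of the Lemma on $\mu$ is equivalent to $\mu(A)=0$. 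So the real content is in (i).

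For the forward direction of (i), I would assume $A\in\mathcal{U}$ and argue that $\mu(A)=1$. Since $\mu$ is two-valued, it suffices to rule out $\mu(A)=0$. If $\mu(A)=0$ then, by property (i) of the Lemma, $\mu(\mathbb N\setminus A)=1$, and by the forward implication applied to $\mathbb N\setminus A$ one would get $\mathbb N\setminus A\in\mathcal{U}$; but $A\in\mathcal{U}$ together with $\mathbb N\setminus A\in\mathcal{U}$ forces $\varnothing=A\cap(\mathbb N\setminus A)\in\mathcal{U}$, contradicting the filter axiom. To avoid circularity I would instead phrase this as a direct comparison: define the collection $\{A:\mu(A)=1\}$ and check it is itself a free ultrafilter, then invoke a uniqueness-style argument. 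For the reverse direction, assuming $\mu(A)=1$, I would suppose $A\notin\mathcal{U}$; maximality gives $\mathbb N\setminus A\in\mathcal{U}$, and then the already-established forward direction yields $\mu(\mathbb N\setminus A)=1$, so $\mu(A)=\mu(\mathbb N\setminus A)=1$ would violate property (i) of the Lemma, which forbids both a set and its complement from having measure $1$.

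The cleanest route, which I would actually carry out, is to show that the two families $\mathcal{U}$ and $\mathcal{U}_\mu\eqdef\{A\in\mathcal{P}(\mathbb N):\mu(A)=1\}$ coincide. The Lemma on $\mu$ already establishes that $\mathcal{U}_\mu$ is closed under finite intersection (part (ii)), is upward closed (part (iii)), omits $\varnothing$ (since $\mu(\varnothing)=0$), and satisfies the dichotomy that exactly one of $A,\mathbb N\setminus A$ lies in $\mathcal{U}_\mu$ (part (i)); the latter makes $\mathcal{U}_\mu$ an ultrafilter by Theorem~\ref{Thm: Char}, and it is free because every finite set has measure $0$ and hence its cofinite complement lies in $\mathcal{U}_\mu$, so $\mathcal{F}_r\subseteq\mathcal{U}_\mu$. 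Once $\mathcal{U}_\mu$ is known to be a free ultrafilter, the equivalence $A\in\mathcal{U}\Leftrightarrow A\in\mathcal{U}_\mu$ is exactly the claim, proved by the complement-swapping argument above using Theorem~\ref{Thm: Char} on both filters.

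The main obstacle is purely logical rather than computational: one must break the apparent circularity between the two directions of the biconditional. Both directions want to appeal to ``the other set has measure/membership one,'' so the argument has to be anchored somewhere independent. I expect the safest anchor is the dichotomy property~(i) of $\mu$ paired with the ultrafilter dichotomy from Theorem~\ref{Thm: Char}: each of $\mathcal{U}$ and $\mathcal{U}_\mu$ contains exactly one of $A$ and $\mathbb N\setminus A$, so proving they agree on $A$ reduces to a finite case analysis with no self-reference. Beyond this, the verification is routine bookkeeping with the filter axioms and the two-valued, finitely additive structure of $\mu$.
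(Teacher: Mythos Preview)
Your argument has a genuine gap at the very end. You correctly show that $\mathcal{U}_\mu=\{A:\mu(A)=1\}$ is a free ultrafilter, but then you claim that $\mathcal{U}=\mathcal{U}_\mu$ follows ``by the complement-swapping argument\ldots\ each of $\mathcal{U}$ and $\mathcal{U}_\mu$ contains exactly one of $A$ and $\mathbb N\setminus A$, so proving they agree on $A$ reduces to a finite case analysis.'' That inference is false: the dichotomy only tells you that each ultrafilter picks one of the two pieces, not that they pick the \emph{same} one. There are (assuming choice) $2^{2^{\aleph_0}}$ distinct free ultrafilters on $\mathbb N$, and any two of them satisfy the dichotomy on every $A$ while still disagreeing on some $A$. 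So no amount of complement-swapping or case analysis will force an arbitrary free ultrafilter $\mathcal{U}$ to coincide with $\mathcal{U}_\mu$.

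The deeper issue is that the theorem, as literally stated, gives you an arbitrary free ultrafilter $\mathcal{U}$ and an independently fixed measure $\mu$, with no link between them; under that reading the statement is simply false, and no proof can succeed. The intended reading is almost certainly that $\mu$ and $\mathcal{U}$ determine each other (either $\mu$ is taken to be the indicator of $\mathcal{U}$, or $\mathcal{U}$ is taken to be $\mathcal{U}_\mu$), in which case the content of the theorem is exactly what you did establish: that $\mathcal{U}_\mu$ is a free ultrafilter. The paper's own proof suffers from the same hidden assumption --- its steps ``$A\in\mathcal{U}$ and $\mu(A)=0$ imply $\mu(\mathbb N)=0$'' and ``$A\notin\mathcal{U}$ implies $\mathbb N\notin\mathcal{U}$'' are unjustified without already knowing $\mu$ and $\mathcal{U}$ are tied together. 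Your write-up would be correct if you replaced the final ``$\mathcal{U}=\mathcal{U}_\mu$'' claim with the conclusion that the correspondence $\mathcal{U}\mapsto\mu_{\mathcal U}$, $\mu\mapsto\mathcal{U}_\mu$ is a bijection between free ultrafilters and measures of the specified type.
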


\begin{proof} 
\begin{description}
\item[(i)]
\begin{description}
\item[($\Rightarrow$)]
Let $A\in\mathcal{U}$. Suppose (to the contrary) that $\mu(A)=0$. Then
this implies that $\mu(\mathbb N) =0$, which is not possible since $\mu(\mathbb N)$ is defined to be 1.
\item[($\Leftarrow$)]
Let $A\in\mathcal{P}(\mathbb N) \text{ such that  } \mu(A)=1 \text{ and let } \mathcal{U} \text{ be a free ultrafilter on } \mathbb N$. Suppose (to the contrary) that $A\not\in\mathcal{U}$. Then
\[
\mu(A\cup(\mathbb N - A))=\mu(A)+\mu(\mathbb N - A)=1,
\]
which implies that $\mathbb N \not\in\mathcal{U}$, a contradiction.
\end{description}

\item[(ii)] The proof for $\bf{(ii)}$ is omitted since it is similar to the proof of $\bf{(i)}$.
\end{description}
\end{proof}

\newpage



\begin{thebibliography}{99}

\bibitem{dBrannan} David Brannan, \emph{A First Course in Mathematical Analysis}, Cambridge University Press, Cambridge, New York, 2006.

\bibitem{cavalcante} Ray Cavalcante, \emph{Reduction of the Number of Quantifiers in Real Analysis through Infinitesimals}, 2008. arXiv:0809.4814v2.

\bibitem{davis} M. Davis, \emph{Applied Non-standard Analysis}. Dover, 2005.

\bibitem{jHall} James F. Hall, \emph{Completeness of Ordered Fields}, MS \#1004, 2011 (http://digitalcommons.calpoly.edu/mathsp/3/). 

\bibitem{hewitt} E. Hewitt and K. Stromberg, \emph{Real and Abstract Analysis}. Springer, Graduate Text in Mathematics 25, New York, Berlin, Heidelberg, 1975.

\bibitem{Hurd and Loeb} A.E. Hurd, P.A. Loeb, \emph{An Introduction To Nonstandard Real Analysis}. Academic Press INC, London, 1985.

\bibitem{jKeislerE} H. J. Keisler, \emph{Elementary Calculus: An Approach Using Infinitesimals},  Prindle, Weber \& Schmidt, Boston, 1976 (http://www.math.wisc.edu/\newline~keisler/calc.html).

\bibitem{jKeislerF} H. J. Keisler, \emph{Foundations of Infinitesimal Calculus}, Prindle, Weber~\&~Schmidt, 
Boston 1976 (http://www.math.wisc.edu/~keisler/\newline foundations.html). 

\bibitem{lindstrom} T. Lindstr\o m, An invitation to nonstandard analysis. In: Cutland N (ed) \emph{Nonstandard Analysis and its applications}. Cambridge University Press, London, 1988, pp 1-105.

\bibitem{aRob66} A. Robinson, {\em Nonstandard Analysis}, North Holland, Amsterdam, 1966.

\bibitem{poma} W. Rudin, \emph{Principles of Mathematical Analysis}. McGraw Hill, third edition, 1976.

\bibitem{tdTod2000a} Todor~D. Todorov, \emph{Back to Classics: Teaching Limits through Infinitesimals}, International Journal of Mathematical Education in Science and Technology, 2001, vol. 32, no. 1, p. 1-20 (available at ArxivMathematics: [http://arxiv.org/abs/1108.4657]).

\end{thebibliography}
\end{document}